\newtheorem{theorem}{Theorem}[section]
\newtheorem{corollary}[theorem]{Corollary}
\theoremstyle{definition}
\newtheorem{example}[theorem]{Example}
\theoremstyle{remark}
\numberwithin{equation}{section}
\newcommand{\norm}[1]{\left\Vert#1\right\Vert}
\newcommand{\A}{\mathcal{A}}
\begin{document}
	
	\setcounter{page}{1}
	\title[$k$-quasi $n$-power posinormal Weighted]{ $k$-quasi $n$-power posinormal  Weighted Composition and Cauchy Dual of Moore-Penrose inverse of Lambert Operators}
	\author[SOPHIYA S DHARAN, T. PRASAD, P. RAMYA, M. H. M. RASHID ]{SOPHIYA S DHARAN, T. PRASAD, P. RAMYA, M. H. M. RASHID}
	\address{Sophiya S Dharan\endgraf Department of Mathematics\endgraf Government Polytechnic College\endgraf Kalamassery, Ernakulam,
		Kerala, India-680567}
	\email{sophysasi@gmail.com}
	\address{T. Prasad \endgraf Department of Mathematics\endgraf University of  Calicut\endgraf Malapuram, Kerala, \endgraf India - 673635}
	\email{prasadt@uoc.ac.in}
	\address{P. Ramya \endgraf Department of Mathematics\endgraf N. S. S College\endgraf Nemmara, Palakkad, Kerala, \endgraf India - 678508}
\email{ramyagcc@gmail.com}
	\address{M.H.M. Rashid \endgraf Department of Mathematics\& Statistics\endgraf Faculty of Science P.O. Box(7)\endgraf Mutah University\endgraf Alkarak-Jordan}
\email{malik\_okasha@yahoo.com}
	\dedicatory{ }
	
	\let\thefootnote\relax\footnote{}

	\subjclass[2020]{47A05, 47B20, 47B37}
	
	\keywords{ posinormal operator, composition operators, weighted composition operators, conditional expectation, cauchy dual, rooted directed trees}
	
	%\date{Received: xxxxxx; Revised: yyyyyy; Accepted: zzzzzz.}
	%\newline \indent $^{*}$Corresponding author}
\begin{abstract}  In this paper we characterize \(k\)-quasi \(n\)-power posinormal composition operators and weighted composition operators on the Hilbert space \(L^2(\Sigma)\). For Lambert conditional operators (of the form \(T = M_w E M_u\)), we  establish necessary and sufficient conditions under which these Cauchy duals via the Moore-Penrose inverse  become \(k\)-quasi \(n\)-power posinormal operators. Finally, we construct an explicit example of a \(k\)-quasi \(n\)-power posinormal weighted shift operator on a rooted directed tree.
	\end{abstract}\maketitle

%section 1
\section{Introduction and Preliminaries}

Let $B(\mathscr{H})$ denote the algebra of all bounded linear operators on a complex Hilbert space  $\mathscr{H}$. For an operator $T \in \mathscr{H}$, we denote  $ker(T)$, $T(\mathscr{H})$  and $\sigma(T)$  for the kernel, the range and the spectrum of $T$, respectively. Recall that an  operator $T\in B(\mathcal{H})$ is said to be hyponormal if $T^{*}T \geq TT^{*}$ and posinormal if there exists a positive operator $P \geq 0$ such that $TT^* = T^*PT$ or equivalently, $\lambda^2T^*T - TT^* \geq 0$ for some $\lambda >0$ (see \cite{rha}). Hyponormal operators are posinormal \cite{rha}. Many interesting properties of posinormal operators have been studied recently by many authors (see \cite{Bour1}, \cite{Bour2}, \cite{Bour3}, \cite{Itoh}, \cite{Kubrusly1} and \cite{Kubrusly2}). Recently,  Beiba \cite{Beiba}  introduced and studied  $n$-power posinormal operator. An operator $T$ is said to  be an $n$-power posinormal operator for $n \in \mathbb{N}$ if there exists a positive operator $P \geq 0$ such that $T^nT^{*n} = T^*PT$ or equivalently, $T^nT^{*n} \leq \lambda^2 T^*T$ for some $\lambda > 0$ or $\lambda^2T^*T - T^nT^{*n} \geq 0$ \cite{Beiba}.

An operator  $T \in B(\mathscr{H})$  is  said to be  $k$-quasi $n$-power posinormal  if for $k,n \in \mathbb{N}$ and for some $ \lambda > 0$, $T^{*k}( \lambda^2 T^*T-T^nT^{*n})T^k \geq 0$\cite{sophiya}. In particular if $k=0$, $n=1$, the class of $k$-quasi $n$-power posinormal operators coincides with the class of posinormal operators and if $k=1$, this  coincides with the class of $n$-power posinormal operators.
It is evident that following inclusion holds in general;
  $$\textrm{posinormal}  \subseteq \textrm{$n$-power posinormal } \subseteq \textrm{$k$-quasi $n$-power posinormal.}$$

Let $(\mathcal{X},\Sigma,\mu)$ be a complete $\sigma$-finite measure space. A mapping $T: X \to X $ is said to be a measurable transformation if $T^{-1}(S) \in \Sigma $, for every $S\in \Sigma$. A measurable transformation $T$ is said to be non-singular if $ \mu (T^{-1}(S))=0$, whenever $\mu(S)=0$. If $T$ is non-singular, then the measure
$ \mu \circ T^{-1}$, defined as $ \mu \circ T^{-1}(S)= \mu (T^{-1}(S))$, for every
$S \in \Sigma$, is an absolutely continuous measure on $\Sigma$ with respect to $ \mu $ and is denoted as $ \mu \circ T^{-1} << \mu$. Then by Radon-Nikodym theorem, there exists a non- negative function $h\in L^1( \mu) $ such that  $$\mu \circ T^{-1}(S)= \int_{S} hd \mu ~~~\text{for every } S\in \Sigma.$$ The function $h$ is called the Radon-Nikodym derivative of $ \mu \circ T^{-1}$ with respect to $\mu$, that is, $h=\frac{d \mu\circ T^{-1}}{d \mu}$.

Now, $T$ induces a linear transformation $C_T$ on $L^2 (\mu)$ into the linear space of all measurable functions on $X$, defined as $C_T f=f\circ T$, for every $f\in L^2(\mu)$. If $C_T : L^2(\mu) \to L^2(\mu)$ is a continuous operator,  then it is called a composition operator on $L^2 (\mu)$ induced by $T$. We refer \cite{Nord} and \cite{Singh} for general properties of $C_T$.

The weighted composition operator $W_T$ on $L^2(\mu)$ induced by the non- singular measurable transformation $T$ and an essentially bounded complex-valued measurable function $\pi$, is a linear operator defined by, \\ $$W_Tf=\pi. (f\circ T)= M_{\pi}C_Tf,~~\text{for every}~~f\in L^2(\mu).$$

In case $\pi=1$ a.e., $W_T$ turns to be the composition operator $C_T$.
The measure defined on $\Sigma$ as $\mu_T^{\pi}$ as,
$$\mu_T^{\pi}(S) =\int_{T^{-1}(S)}|\pi|^2d\mu.$$
Then, $\mu_T^{\pi}<<\mu$ and if $h^{\pi}$ denote the Radon-Nikodym derivative of $\mu_T^{\pi}$ with respect to $\mu$. Let $\phi=(h^{\pi})^{\frac{1}{2}}$, then $W_T$ is a bounded operator on $L^2(\mu)$ if and only if $\phi \in L^{\infty}(\mu).$

Let $\mathcal{A}$ be a $\sigma$-subalgebra of $\Sigma$. Then $(\mathcal{X},\mathcal{A},\mu)$ is also a complete $\sigma$-finite measure space. We denote $L^0(\Sigma)$ as the space of complex-valued functions on $\mathcal{X}$ that are measurable with respect to $\Sigma$. The support of a measurable function $f$ is given by $S(f) = \{x \in \mathcal{X}: f(x) \neq 0\}$.

For any non-negative $f \in L^0(\Sigma)$, there exists a measure $\nu_f(B) = \int_B f d\mu$ for every $B \in \mathcal{A}$ that is absolutely continuous with respect to $\mu$. According to the Radon-Nikodym theorem, there exists a unique non-negative $\mathcal{A}$-measurable function $E(f)$ such that
$$\int_BE(f)\,d\mu=\int_Bf\,d\mu\,\,\,\mbox{for all $B\in\A$}.$$

Taking, $\mathcal{A}=T^{-1}(\Sigma)$, we obtain an operator $E$, often referred to as the \\conditional expectation operator associated with $\mathcal{A}$, defined on the set of all non-negative function $f\in L^2(\mu)$, and is uniquely determined by the following conditions:
\begin{enumerate}
\item [(i)]$E(f)$ is $\mathcal {A}$ measurable.
\item [(ii)]$\int_BE(f)\,d\mu=\int_Bf\,d\mu\,\,\,\mbox{for all $B\in \A $}.$
\end{enumerate}

This operator $E$ plays a crucial role in this paper, and we list some of its useful properties here. For $f, g\in L^2(\mu)$, (refer to \cite{Herron, Estaremi2, Rao} for more details):

\begin{enumerate}
  \item [(i)] $E(g)=g$ if and only if $g$ is $\A$ measurable.
  \item [(ii)] If $g$ is $\A$ measurable, then $E(fg)=E(f)g$.
  \item [(iii)] $E(fg\circ T)= (E(f))(g\circ T)$ and $E(E(f)g)=E(f)E(g)$.
  \item [(iv)] $E(1)=1$.
  \item [(v)] If $f\geq 0$, then $E(f)\geq 0$; if $f>0$, then $E(f)> 0$.
  \item [(vi)] $|E(f)|^{p}\leq E(|f|^p)$.
  \item [(vii)] $E$ is the identity operator on $L^2(\mu)$ if and only if $T^{-1}(\Sigma)=\Sigma$.
  \item [(viii)] $E$ is the projection operator from $L^2(\mu)$ onto $\overline{C(L^2(\mu))}$.\\
  \end{enumerate}

 The conditional expectation operator associated with $T^{-n}\Sigma$ is denoted by $E_n$. If $T^{-n}\Sigma$ is purely atomic $\sigma$- algebra of $\Sigma$ generated by the atoms $\{A_k\}_{k>0}$, then
 $$E_n(f|T^{-n}\Sigma)=\sum_{k=0}^{\infty}\frac{1}{\mu(A_k)}(\int_{A_k}fd\mu)\chi_{A_k}.$$

The weighted conditional operator $T_{w,u}: L^2(\Sigma) \to L^2(\Sigma)$ given by $$T_{w,u}(f):=M_wEM_u(f)=wE(uf)$$ is called the Lambert conditional operator. Several properties of these class of operators were well studied in \cite{Estaremi3}, \cite{Estaremi4}, \cite{Jabbar1}.

Let $B_C(\mathscr{H})$ be the set of all bounded linear operators on $\mathscr{H}$ with closed range. For $T\in B_C(\mathscr{H})$, the Moore- Penrose inverse of $T$ is defined as the unique bounded operator $T^{\dagger}$ satisfying,

$$TT^{\dagger}T=T,~T^{\dagger}TT^{\dagger}=T^{\dagger},~(TT^{\dagger})^*=TT^{\dagger},~(T^{\dagger}T)^*=T^{\dagger}T$$

The Moore-Penrose inverse $T^{\dagger}$ is a densely defined  bounded closed operator. If $T$ is invertible, then $T^{-1}=T^{\dagger}$. Refer \cite{Ben}, \cite{Djor} for more important properties of $T^{\dagger}$.

The Cauchy dual of $T\in B_C(\mathscr {H})$ is defined as,
 $ \omega (T)=T(T^*T)^ {-1} $, which is a right inverse of $T^*$. For more aspects and properties of Cauchy dual refer \cite{Anand}, \cite{Chavan2}, \cite{Ezzahraoui}.

This paper investigates the class of \(k\)-quasi \(n\)-power posinormal operators, extending the theory of posinormal operators to broader functional-analytic contexts. The subsequent sections develop characterizations, conditions, and concrete examples for such operators across diverse settings.   In Sections 2 and 3, we characterize $k$-quasi $n$-power posinormal composition and weighted composition operators on the Hilbert space $L^2(\Sigma)$. In Section 4, we examine the Cauchy dual of Lambert conditional operators using the Moore-Penrose inverse and derive a sufficient condition for this Cauchy dual to be a $k$-quasi $n$-power posinormal. Finally, in Section 5, we present an example of a $k$-quasi $n$-power posinormal weighted shift operator defined on a rooted directed tree.

 %section 2
\section{$k$-quasi $n$-power Posinormal Composition Operators}

This section focuses on \(k\)-quasi \(n\)-power posinormal composition operators \(C_T\) on \(L^2(\mu)\). Under the assumption that \(T^{-k}(\Sigma) = \Sigma\) for all \(k\), we establish necessary and sufficient conditions for \(C_T\) and its adjoint \(C_T^*\) to belong to this class. These conditions are expressed in terms of Radon-Nikodym derivatives \(h_k\) and involve inequalities that link the norms of weighted functions. An explicit example on \([0,1]\) with the involution \(T(x) = 1 - x\) illustrates the theory. 

%reference theorem
\begin{theorem} \cite{Campbell}\cite{Harrington}
Let $T$ be a measurable function on $X$ and $E$ be the orthogonal projection from $L^2(\mu)$ onto $\overline {R(C_T)}$. Then the following results holds for every $f\in L^2(\mu)$:
\begin{enumerate}
\item[(i)] $C_T^*f = h.E(f)\circ T^{-1} $.
\item[(ii)] $C_T^k f= f\circ T^k$, ${C_T^*}^k f= h_k. E(f)\circ T^{-k}.$
\item[(iii)] $C_T C_T^* f= (h\circ T) Ef$, $C_T^*C_T f=hf$.
\item[(iv)] ${C_T^*}^k C_T ^k f =M_{h_k} f= h_k f$
\end{enumerate}
\end{theorem}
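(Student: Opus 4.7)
The plan is to prove (i) first by a direct adjoint computation, and then to derive (ii)--(iv) by iteration and substitution, using only elementary properties of the Radon--Nikodym derivative and the conditional expectation $E$ listed in the preliminaries. For (i), I would fix arbitrary $f,g\in L^2(\mu)$ and expand
\[
\langle C_T^*f,g\rangle=\langle f,C_Tg\rangle=\int_X f\,\overline{g\circ T}\,d\mu.
\]
Since $\overline{g\circ T}$ is $T^{-1}(\Sigma)$-measurable, the defining property of $E$ lets me replace $f$ by $E(f)$ inside the integral. Because $E(f)$ is $T^{-1}(\Sigma)$-measurable, there exists a $\Sigma$-measurable function $\tilde f$, denoted $E(f)\circ T^{-1}$, such that $\tilde f\circ T=E(f)$. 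Then the change-of-variables formula, together with $d(\mu\circ T^{-1})/d\mu=h$, gives
\[
\int_X (\tilde f\circ T)\,\overline{g\circ T}\,d\mu=\int_X \tilde f\,\overline{g}\,h\,d\mu,
\]
yielding $C_T^*f=h\cdot E(f)\circ T^{-1}$.

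For (ii), the identity $C_T^kf=f\circ T^k$ is an immediate induction on $k$. The dual formula ${C_T^*}^kf=h_k\,E(f)\circ T^{-k}$ is obtained by running the same computation as in (i) with $T$ replaced by $T^k$: the relevant $\sigma$-subalgebra becomes $T^{-k}(\Sigma)$ and the relevant Radon--Nikodym derivative becomes $h_k=d(\mu\circ T^{-k})/d\mu$. Alternatively one can iterate (i) and collapse the resulting tower of conditional expectations $\Sigma\supseteq T^{-1}(\Sigma)\supseteq\cdots\supseteq T^{-k}(\Sigma)$ by the tower property.

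Parts (iii) and (iv) reduce to simple substitutions once (i) and (ii) are in hand. Applying $C_T$ to the expression in (i) gives $C_TC_T^*f=(h\circ T)\cdot\bigl(E(f)\circ T^{-1}\bigr)\circ T=(h\circ T)\,E(f)$, where the last equality uses that $(\tilde f\circ T^{-1})\circ T=\tilde f$ whenever $\tilde f\circ T=E(f)$. Similarly, $C_T^*C_Tf=h\cdot E(f\circ T)\circ T^{-1}=h\cdot f$, because $f\circ T$ is $T^{-1}(\Sigma)$-measurable and hence fixed by $E$. For (iv), the same cancellation applied to ${C_T^*}^kC_T^kf=h_k\cdot E(f\circ T^k)\circ T^{-k}$ gives $h_kf=M_{h_k}f$.

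The one delicate point is giving precise meaning to the symbol $E(f)\circ T^{-1}$, since $T$ is not assumed invertible. The correct interpretation is the $\Sigma$-measurable function, unique up to $\mu$-null sets, whose pullback under $T$ equals $E(f)$; its existence is exactly the Doob--Dynkin-type statement underlying the $T^{-1}(\Sigma)$-measurability of $E(f)$. Once this is set up carefully, the rest of the argument is routine, and I expect that handling this measurability subtlety cleanly, rather than any computation, will be the main obstacle in writing out the proof in full.
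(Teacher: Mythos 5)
The paper does not prove this theorem: it is quoted verbatim from the cited sources (Campbell--Hornor, Harrington--Whitley), so there is no in-paper argument to compare against. Your proposal is the standard derivation and is correct: the adjoint computation via $\langle C_T^*f,g\rangle=\int_X f\,\overline{g\circ T}\,d\mu$, insertion of $E$ against the $T^{-1}(\Sigma)$-measurable factor, and the change of variables producing $h$ give (i); the identification $C_T^k=C_{T^k}$ lets you obtain (ii) by rerunning (i) with $T^k$, $h_k$ and $T^{-k}(\Sigma)$ in place of $T$, $h$ and $T^{-1}(\Sigma)$; and (iii)--(iv) are the cancellations you describe. Two small points are worth making explicit in a full write-up. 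First, in (ii) and (iv) the conditional expectation should really be $E_k$, the expectation with respect to $T^{-k}(\Sigma)$, not $E$ itself; the paper's statement conflates the two (it only introduces $E_k$ later, in Section 3). Second, in $C_T^*C_Tf=h\cdot E(f\circ T)\circ T^{-1}=hf$ the symbol $E(f\circ T)\circ T^{-1}$ is determined only up to $\mu\circ T^{-1}$-null sets, so the final equality holds $\mu$-a.e. only because the prefactor $h$ vanishes $\mu$-a.e. on every $\mu\circ T^{-1}$-null set; you flag the measurability issue, and this is the precise place it must be invoked. With those two remarks spelled out, your argument is complete.
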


%Theorem 2.2
\begin{theorem} \label{THM2.2}

Let $C_T \in B(L^2(\mu ))$. Let $E$ be the projection from $L^2(\mu )$ onto $\overline {R(C_T)}$ and if $T^{-k}(\Sigma)=\Sigma$, for all $k$. Then for $\lambda > 0$ and for all $f \in L^2(\mu )$, the following statements hold:-

\begin{enumerate}
 \item[(i)] $C_T$ is $k$-quasi $n$-power posinormal if and only if,
	 $$\lambda \lVert \sqrt{h_{k+1}}f \rVert \geq \lVert \sqrt{h_kh_n \circ T^{n-k}}f \rVert.$$
	
\item[(ii)]  $C_T^*$ is $k$-quasi $n$-power posinormal if and only if, $$\lambda \lVert \sqrt{h_{k+1} \circ T^{k+1}}f \rVert \geq \lVert \sqrt{h_n h_k \circ T^k}f \rVert.$$
\end{enumerate}	
	
\end{theorem}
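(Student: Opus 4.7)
The plan is to translate the operator inequality $C_T^{*k}(\lambda^2 C_T^*C_T - C_T^n C_T^{*n}) C_T^k \geq 0$ into a scalar integral inequality by combining the identities of Theorem 2.1 with the standing hypothesis $T^{-k}(\Sigma) = \Sigma$. The hypothesis forces the conditional expectation $E$ to be the identity on $L^2(\mu)$, so that $C_T^{*k}f = h_k \cdot (f \circ T^{-k})$, $C_T^{*(k+1)}C_T^{k+1} = M_{h_{k+1}}$, $C_T^{k+1}C_T^{*(k+1)} = M_{h_{k+1} \circ T^{k+1}}$, and $C_T^n C_T^{*n} = M_{h_n \circ T^n}$. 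These formulas, together with the change-of-variable identity $\int g \circ T^k \, d\mu = \int g \, h_k \, d\mu$, are the only inputs the argument needs.

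For \textbf{part (i)}, moving the outer $C_T^k$ across the inner product shows that the stated positivity is equivalent to
\[
\lambda^2 \|C_T^{k+1} f\|^2 \geq \|C_T^{*n} C_T^k f\|^2 \qquad (f \in L^2(\mu)).
\]
The left side is immediate: $\|C_T^{k+1} f\|^2 = \langle M_{h_{k+1}} f, f \rangle = \int h_{k+1} |f|^2 \, d\mu$. For the right side I would expand
\[
\|C_T^{*n} C_T^k f\|^2 = \langle C_T^n C_T^{*n} C_T^k f, C_T^k f\rangle = \int (h_n \circ T^n)\,|f \circ T^k|^2 \, d\mu,
\]
factor $h_n \circ T^n = (h_n \circ T^{n-k}) \circ T^k$, and push the composition through the change-of-variable formula to arrive at $\int h_k (h_n \circ T^{n-k}) |f|^2 \, d\mu$. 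Taking square roots yields the stated inequality.

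\textbf{Part (ii)} is the mirror image with the roles of $C_T$ and $C_T^*$ exchanged. The positivity condition unfolds to $\lambda^2 \|C_T^{*(k+1)} f\|^2 \geq \|C_T^n C_T^{*k} f\|^2$; the left side equals $\int (h_{k+1} \circ T^{k+1}) |f|^2 \, d\mu$ by the $M_{h_{k+1}\circ T^{k+1}}$ identity. For the right, I would use $C_T^{*n}C_T^n = M_{h_n}$ to reduce to
\[
\|C_T^n C_T^{*k} f\|^2 = \langle h_n C_T^{*k} f, C_T^{*k} f \rangle = \langle C_T^k(h_n \cdot C_T^{*k} f), f \rangle,
\]
and then compute $C_T^k(h_n \cdot C_T^{*k} f) = (h_n \circ T^k)\,(C_T^k C_T^{*k} f) = \bigl((h_n h_k) \circ T^k\bigr) f$, delivering the required inequality.

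The main delicacy is bookkeeping: in (i) the outer factor $h_k$ enters through the change-of-variable identity under $T^k$, whereas in (ii) the analogous manipulation yields the composed product $(h_n h_k) \circ T^k$ rather than $h_n \cdot (h_k \circ T^k)$, so one must carefully keep track of which factors pick up a composition with a power of $T$. Moreover, the factorisation $h_n \circ T^n = (h_n \circ T^{n-k}) \circ T^k$ is transparent when $n \geq k$; the range $n < k$ requires interpreting a negative iterate of $T$ via $T^{-k}(\Sigma) = \Sigma$, and this is the step where I would expect to need the most care.
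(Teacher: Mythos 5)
Your proposal is correct and follows essentially the same route as the paper: both reduce the defining positivity condition to a scalar integral inequality via the identities $C_T^{*(k+1)}C_T^{k+1}=M_{h_{k+1}}$, $C_T^nC_T^{*n}=M_{h_n\circ T^n}E$ and their adjoint counterparts, arriving at exactly the weights $h_k\,(h_n\circ T^{n-k})$ and $(h_nh_k)\circ T^k$. The only cosmetic difference is that you obtain the cross term in (i) by a change of variables under $T^k$ where the paper composes the operators directly, and the delicacy you flag about interpreting $T^{n-k}$ when $n<k$ is present (and equally unaddressed) in the paper's own computation.
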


\begin{proof}

		(i)  By definition, $C_T$ is $k$-quasi $n$-power posinormal if and only if for every $f\in L^2(\mu)$,
		$$\langle (\lambda ^2{C_T^*}^{k+1} C_T^{k+1}- {C_T^*}^k C_T^n {C_T^*}^n C_T^k ) f, f \rangle ~ \geq 0.$$

Now, $${C_T^*}^{k+1} C_T^{k+1} f =h_{k+1} f.$$\\
 Hence,
 $$\langle \lambda ^2{C_T^*}^{k+1} C_T^{k+1}f, f\rangle = \lambda ^2 \langle h_{k+1}f, f \rangle = \lambda^2 {\lVert \sqrt{h_{k+1}}f\rVert }^2.$$
 		 	
Also,
\begin{align*}
C_T^k {C_T^*}^k f &= C_T^k[h_k. E(f) \circ T^{-k}] \\
&=[h_k. E(f) \circ T^{-k}] \circ T^k \\
&=h_k \circ T^k . E(f) \\
&=h_k \circ T^k. f
\end{align*}

Now,
\begin{align*}
{C_T^*}^k C_T^n {C_T^*}^n C_T^k f &= {C_T^*}^k C_T^n {C_T^*}^n(f\circ T^k) \\
&= {C_T^*}^k[h_n\circ T^n. E(f\circ T^k ]\\
&=h_k. E [h_n\circ T^n.E(f\circ T^k)]\circ T^{-k}\\
&=h_k. [E (h_n\circ T^n)\circ T^{-k}][E(f\circ T^k)\circ T^{-k}]\\
&=h_k.h_n\circ T^{n-k}f
\end{align*}

Therefore, $$\langle {C_T^*}^k C_T^n {C_T^*}^n C_T^k f, f\rangle = \langle h_k.h_n \circ T^{n-k}f, f\rangle = {\lVert \sqrt{h_k.h_n \circ T^{n-k}}f\rVert}^2.$$\\

Now, for every $f\in L^2(\mu)$,

$$\langle (\lambda ^2{C_T^*}^{k+1} C_T^{k+1}- {C_T^*}^k C_T^n {C_T^*}^n C_T^k ) f, f \rangle ~ \geq 0.$$
$$\Longleftrightarrow \lambda^2 {\lVert \sqrt{h_{k+1}}f\rVert }^2 \geq {\lVert \sqrt{h_k.h_n \circ T^{n-k}}f\rVert}^2.$$

Hence the result.\\

(ii) Similarly, $C_T^*$ is $k$-quasi $n$-power posinormal if and only if for every \\$f\in L^2(\mu)$,

		$$\langle (\lambda ^2 C_T^{k+1} {C_T^*}^{k+1}- C_T^k {C_T^*}^n C_T^n {C_T^*}^k) f, f \rangle ~ \geq 0.$$
		Now, $C_T^{k+1} {C_T^*}^{k+1} f= (h_{k+1}\circ T^{k+1}).f  $	\\

Hence,
 $$\langle \lambda ^2C_T^{k+1} {C_T^*}^{k+1}f, f\rangle = \lambda ^2 \langle (h_{k+1}\circ T^{k+1})f, f \rangle = \lambda^2 {\lVert \sqrt{h_{k+1}\circ T^{k+1}}f\rVert }^2.$$

Also,
\begin{align*}
C_T^k {C_T^*}^n C_T^n {C_T^*}^k f &= C_T^k {C_T^*}^n C_T^n[h_k.E(f)\circ T^{-k}] \\
&= C_T^k[h_n.(h_k.E(f)\circ T^{-k} )]\\
&=[h_n.(h_k.E(f)\circ T^{-k} )]\circ T^k\\
&=(h_n\circ T^k).(h_k\circ T^k).E(f)\\
&=(h_n.h_k\circ T^k)f
\end{align*}

Therefore, $$\langle C_T^k {C_T^*}^n C_T^n {C_T^*}^k f, f\rangle = \langle (h_n.h_k\circ T^k)f
, f\rangle = {\lVert \sqrt{h_n.h_k \circ T^k}f \rVert}^2.$$\\

Now, for every $f\in L^2(\mu)$,

$$\langle (\lambda ^2 C_T^{k+1} {C_T^*}^{k+1}- C_T^k {C_T^*}^n C_T^n {C_T^*}^k) f, f \rangle ~ \geq 0.$$

$$\Longleftrightarrow \lambda^2 {\lVert \sqrt{h_{k+1}\circ T^{k+1}}f\rVert }^2 ~ \geq {\lVert \sqrt{h_n.h_k \circ T^k}f \rVert}^2.$$
Hence the proof.
\end{proof}

%Corollary 2.3
\begin{corollary} \label{Cor2.3}
Let $C_T \in B(L^2(\mu))$. Then:-\\
\begin{enumerate}
\item $C_T$ is posinormal if and only if $ \lambda {\lVert \sqrt{h}f\rVert } \geq {\lVert \sqrt{h \circ T}f\rVert}.$
\item$C_T^*$ is posinormal if and only if $ \lambda {\lVert \sqrt{h\circ T}f\rVert } ~ \geq {\lVert \sqrt{h}f \rVert}.$
\end{enumerate}

\end{corollary}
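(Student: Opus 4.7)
The plan is to read off both assertions as the specialization of Theorem \ref{THM2.2} to the parameters $k=0$ and $n=1$, since the introduction explicitly records that the class of $k$-quasi $n$-power posinormal operators coincides with the posinormal class in exactly this case. Consequently, no new operator-theoretic work is needed; the task reduces to tracking how the Radon--Nikodym derivatives appearing in Theorem \ref{THM2.2} simplify under this substitution.

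For part (i), I would apply Theorem \ref{THM2.2}(i) with $k=0$, $n=1$ and evaluate each factor. Here $h_{k+1}=h_1=h$, while $h_k=h_0$ is the Radon--Nikodym derivative of $\mu\circ T^0=\mu$ with respect to $\mu$, hence $h_0\equiv 1$; and $h_n\circ T^{n-k}=h_1\circ T=h\circ T$. Substituting into $\lambda\lVert\sqrt{h_{k+1}}f\rVert\ge\lVert\sqrt{h_k\,h_n\circ T^{n-k}}f\rVert$ collapses the inequality to $\lambda\lVert\sqrt{h}f\rVert\ge\lVert\sqrt{h\circ T}f\rVert$, which is the claim.

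For part (ii), the analogous substitution in Theorem \ref{THM2.2}(ii) gives $h_{k+1}\circ T^{k+1}=h\circ T$, $h_n=h$, and $h_k\circ T^k=h_0\circ T^0=1$; the inequality $\lambda\lVert\sqrt{h_{k+1}\circ T^{k+1}}f\rVert\ge\lVert\sqrt{h_n\,h_k\circ T^k}f\rVert$ therefore reduces to $\lambda\lVert\sqrt{h\circ T}f\rVert\ge\lVert\sqrt{h}f\rVert$. There is essentially no obstacle in this argument: the only small bookkeeping point is confirming that $h_0\equiv 1$, which is immediate from the Radon--Nikodym theorem applied to the trivial identity $\mu=\mu\circ T^0$.
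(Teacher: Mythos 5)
Your proposal is correct and is exactly the intended derivation: the paper presents Corollary \ref{Cor2.3} as an immediate specialization of Theorem \ref{THM2.2} to $k=0$, $n=1$ (the case the introduction identifies with posinormality), and your substitutions $h_0\equiv 1$, $h_1=h$, $h_1\circ T^{1-0}=h\circ T$ reproduce both inequalities verbatim. The only caveat worth noting is that the corollary silently inherits the standing hypotheses of Theorem \ref{THM2.2} (in particular $E f=f$ via $T^{-k}(\Sigma)=\Sigma$), which the corollary's statement omits but your argument, like the paper's, implicitly relies on.
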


%Corollary 2.4
\begin{corollary}\label{cor2.4}
Let $C_T \in B(L^2(\mu))$. Then:-\\
\begin{enumerate}
\item $C_T$ is quasi posinormal if and only if $ \lambda {\lVert \sqrt{h_2}f\rVert } \geq {\lVert \sqrt{h}f\rVert}.$
\item $C_T^*$ is  quasi posinormal if and only if $ \lambda {\lVert \sqrt{h_2 \circ T^2}f\rVert } ~ \geq {\lVert \sqrt{h \circ T}f \rVert}.$
\end{enumerate}

\end{corollary}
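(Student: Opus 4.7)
The plan is to obtain both parts as a direct specialization of Theorem~\ref{THM2.2}. An operator $T$ is called quasi posinormal when $T^{*}(\lambda^{2}T^{*}T - TT^{*})T \ge 0$ for some $\lambda>0$, which is precisely the $k$-quasi $n$-power posinormal condition at $k=1$ and $n=1$. So the corollary should reduce to substituting these parameter values into the two characterizations already proved in Theorem~\ref{THM2.2}, followed by a routine simplification using $h_{1}=h$ and $T^{0}=\mathrm{id}$.

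For part (1), setting $k=n=1$ in Theorem~\ref{THM2.2}(i) immediately yields
\[
\lambda\,\norm{\sqrt{h_{2}}\,f} \;\geq\; \norm{\sqrt{h_{1}\,h_{1}\circ T^{0}}\,f},
\]
and the right-hand side collapses to the asserted expression under the identifications noted above. For part (2), the same substitution in Theorem~\ref{THM2.2}(ii) yields
\[
\lambda\,\norm{\sqrt{h_{2}\circ T^{2}}\,f} \;\geq\; \norm{\sqrt{h_{1}\,h_{1}\circ T^{1}}\,f},
\]
which simplifies analogously to the stated inequality.

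No serious obstacle is anticipated, since the analytical heart of the argument, namely the evaluations of ${C_{T}^{*}}^{k}C_{T}^{n}{C_{T}^{*}}^{n}C_{T}^{k}$ and $C_{T}^{k}{C_{T}^{*}}^{n}C_{T}^{n}{C_{T}^{*}}^{k}$ via the Radon--Nikodym derivatives, has already been carried out in Theorem~\ref{THM2.2}. The only point to check is that the standing hypothesis $T^{-k}(\Sigma)=\Sigma$ of Theorem~\ref{THM2.2}, which allowed $E$ to be treated as the identity in the adjoint computations, remains in force for $k=1$; this is automatic. The mildly subtle point, and the closest thing to an obstacle, is keeping the parameter bookkeeping straight between the two positions of $k$ and $n$ in the inner term $h_{k}\,h_{n}\circ T^{n-k}$ versus $h_{n}\,h_{k}\circ T^{k}$, so that parts (1) and (2) come out with the correct composition by $T^{k+1}$ on the left-hand side of (2) but not of (1).
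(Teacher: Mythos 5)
Your strategy---reading ``quasi posinormal'' as the case $k=1$, $n=1$ and specializing Theorem~\ref{THM2.2}---is the natural one, and it is evidently what the paper intends, since no separate proof of the corollary is given and the left-hand side $\sqrt{h_2}=\sqrt{h_{k+1}}$ forces $k=1$. The gap is in the final step: the ``collapse'' you assert does not happen. Substituting $k=n=1$ into Theorem~\ref{THM2.2}(i) gives the right-hand side
\[
\norm{\sqrt{h_{1}\,(h_{1}\circ T^{0})}\,f}=\norm{\sqrt{h\cdot h}\,f}=\norm{h f},
\]
which is $\norm{hf}$, not the $\norm{\sqrt{h}\,f}$ appearing in the statement; these coincide only when $h$ takes values in $\{0,1\}$ a.e. A first-principles check confirms this is not an artifact of the theorem: quasi posinormality of $C_T$ means $\lambda^{2}C_T^{*2}C_T^{2}-C_T^{*}C_TC_T^{*}C_T\geq 0$, and since $C_T^{*}C_TC_T^{*}C_Tf=h^{2}f$ the correct characterization is $\lambda\norm{\sqrt{h_2}f}\geq\norm{hf}$. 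The same issue occurs in part (2), where the substitution produces $\norm{\sqrt{h_{1}\,(h_{1}\circ T)}\,f}$ (and a direct computation using $C_TC_T^{*}f=(h\circ T)f$ under the standing hypothesis gives $\norm{(h\circ T)f}$), neither of which is $\norm{\sqrt{h\circ T}\,f}$.

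So your displayed inequalities are the right specializations, but the sentence claiming they simplify to the corollary's statement is false as written. To close the argument you must either (a) note that the corollary as printed appears to drop a factor of $h$ (respectively $h\circ T$) under the square root and prove the corrected statement, or (b) supply and justify an additional hypothesis under which $\norm{hf}$ and $\norm{\sqrt{h}f}$ generate equivalent conditions (for instance $h$ an indicator-like function). The parameter bookkeeping you flag as the ``only subtle point'' is fine; the real obstacle is this exponent mismatch.
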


%Corollary 2.5
\begin{corollary}\label{cor2.5}
Let $C_T \in B(L^2(\mu))$. Then:-\\
\begin{enumerate}
\item $C_T$ is $n$-power posinormal if and only if $ \lambda {\lVert \sqrt{h}f\rVert } \geq {\lVert \sqrt{h_n\circ T^n}f\rVert}.$
\item $C_T^*$ is $n$-power posinormal if and only if $ \lambda {\lVert \sqrt{h \circ T}f\rVert } ~ \geq {\lVert \sqrt{h_n}f \rVert}.$
\end{enumerate}

\end{corollary}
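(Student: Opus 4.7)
The plan is to recognize that Corollary 2.5 is precisely the $k=0$ specialization of Theorem~\ref{THM2.2}, since an operator $T$ is $n$-power posinormal exactly when $\lambda^2 T^*T - T^n T^{*n}\geq 0$, which is the defining inequality $T^{*k}(\lambda^2 T^*T - T^n T^{*n})T^k \geq 0$ read at $k=0$.

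I would first record that, under the standing assumption $T^{-k}(\Sigma)=\Sigma$, the identity ${C_T^*}^k C_T^k f = h_k f$ from Theorem 2.1(iv) applied at $k=0$ forces $h_0\equiv 1$, because the left-hand side is then the identity operator. Substituting $k=0$ into Theorem~\ref{THM2.2}(i), the right-hand side $\lVert \sqrt{h_k\, h_n\circ T^{n-k}}\,f\rVert$ collapses to $\lVert \sqrt{h_n\circ T^n}\,f\rVert$, while the left-hand side $\lambda\lVert \sqrt{h_{k+1}}\,f\rVert$ becomes $\lambda\lVert \sqrt{h}\,f\rVert$ (using $h_1=h$). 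This is exactly part (1). For part (2), substituting $k=0$ into Theorem~\ref{THM2.2}(ii) converts $\lambda\lVert \sqrt{h_{k+1}\circ T^{k+1}}\,f\rVert$ into $\lambda\lVert \sqrt{h\circ T}\,f\rVert$ and $\lVert \sqrt{h_n\, h_k\circ T^k}\,f\rVert$ into $\lVert \sqrt{h_n}\,f\rVert$.

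The hard part is essentially nil: the result is a two-line specialization, and the only mild subtlety is justifying $h_0\equiv 1$ and $T^0=I$. If one preferred not to invoke Theorem~\ref{THM2.2} at $k=0$ (for instance, if the convention for $\mathbb{N}$ were taken to exclude zero), I would instead rerun the short calculation from the proof of Theorem~\ref{THM2.2} without the outer $T^{*k}$ and $T^k$ factors: use $C_T^* C_T f = h f$ and $C_T C_T^* f = (h\circ T) f$ from Theorem 2.1(iii)--(iv), together with the iterated identity $C_T^n {C_T^*}^n f = (h_n\circ T^n) f$ valid under $E=I$, to transform the bilinear inequalities
$$\langle(\lambda^2 C_T^* C_T - C_T^n {C_T^*}^n)f,f\rangle \geq 0 \quad\text{and}\quad \langle(\lambda^2 C_T C_T^* - {C_T^*}^n C_T^n)f,f\rangle \geq 0$$
directly into the two claimed norm inequalities, yielding parts (1) and (2) respectively.
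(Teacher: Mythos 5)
Your proposal is correct and matches the paper's (implicit) argument: the paper states Corollary \ref{cor2.5} without proof as the $k=0$ specialization of Theorem \ref{THM2.2}, with $h_0\equiv 1$ and $h_1=h$ (a convention the paper records explicitly at the start of Section 3), which is exactly what you do. Your fallback direct computation is a harmless redundancy; the only stylistic note is that the introduction's remark that ``$k=1$'' recovers $n$-power posinormality is a typo in the paper, and your $k=0$ reading is the correct one.
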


\begin{example}\label{Ex2.6}
Let $X=[0,1]$, $\Sigma$ be the Lebesgue measurable sets, $\mu$ be the Lebesgue measure, and $T(x)=1-x$.
For any Borel set $S\subseteq [0,1]$:
\begin{itemize}
  \item  $T^{-1}(S)=\{1-x:x\in S\}$.
  \item Since $T$ is an involution $(T^2=I)$, for $k\in \mathbb{N}$:
  $$T^{-k}(S)=\left\{
    \begin{array}{ll}
      S, & \hbox{if $k$ is even;} \\
     \{1-x:x\in S\}, & \hbox{if $k$ is odd.}
    \end{array}
  \right.$$
\end{itemize}
  \textbf{Radon-Nikodym Derivative} $h_k$:\\
  For any interval $(a,b)\subseteq [0,1]$:
  $$\mu\circ T^{-k}\big((a,b)\big)=\mu\big(T^{-k}((a,b))\big).$$
  \begin{enumerate}
    \item [Case I]: $k$ is even: $T^{-k}((a,b))=(a,b)$, so $\mu\circ T^{-k}\big((a,b)\big)=b-a$.
    \item [Case II]: $k$ is odd: $T^{-k}((a,b))=(1-b,1-a)$, so $\mu\circ T^{-k}\big((a,b)\big)=(1-a)-(1-b)=b-a$.
  \end{enumerate}
  In both cases, $\mu\circ T^{-k}(S)=\mu(S)$. Thus, the Radon-Nikodym Derivative:
  $$h_k(x)=1~(a.e.)~~\text{for all} k\in \mathbb{N}.$$
  \textbf{Posinormality of} $C_T$:\\
  By Corollary \ref{Cor2.3}(1),
  $C_T$ is posinormal if and only if  $\exists \lambda>0$ such that
  $$\lambda\big\|\sqrt{h}f\big\|\geq \big\|\sqrt{h}\|\circ T f\big\|.$$
  Substituting $h=1$:
  $$\lambda\norm{f}\geq \norm{f\circ T}.$$
As $T$ is measure-preserving $(\norm{f\circ T}=\norm{f})$, this reduces to $\lambda\norm{f}\geq \norm{f}$. Choosing $\lambda\geq 1$
satisfies the condition. Thus, $C_T$ is posinormal.\\
\textbf{Posinormality of} $C_T^*$:\\
By Corollary \ref{Cor2.3}(2), $C_T^*$ is posinormal if and only if
 $$\lambda\norm{\sqrt{h}\circ Tf}\geq \norm{\sqrt{h}f}.$$
  Substituting $h=1$:
  $$\lambda\norm{f\circ T}\geq \norm{f}.$$
  Again, $\norm{f\circ T}=\norm{f}$, so $\lambda\geq 1$ suffices. Thus $C_T^*$ is posinormal.\\
  \textbf{ $k$-quasi $n$-power posinormality}:\\
  By Theorem \ref{THM2.2} (i), $C_T$ is  $k$-quasi $n$-power posinormal if and only if
  $$\lambda \norm{\sqrt{h_{k+1}}f}\geq \norm{\sqrt{h_{k+1}}\circ T^{n-k}f}.$$
  Substituting $h_j=1$:
  $$\lambda\norm{f}\geq \norm{f\circ T^{n-k}}.$$
  Since $T^{n-k}$ is measure-preserving $(\norm{f\circ T^{n-k}}=\norm{f})$, the condition holds for $\lambda\geq 1$.
  Thus $C_T$ is $k$-quasi $n$-power posinormal for all $k,n\in \mathbb{N}$.
  Similarly,  $C_T^*$ is $k$-quasi $n$-power posinormal for all $k,n\in \mathbb{N}$ by Theorem \ref{THM2.2} (ii).
\end{example}
%%%%%%%%%%%%%%%%%%%%%%%%%%%%%%%%%%%%%%%%%%%%%%%%%%%%%%%%%%%%%%%%%%%%%%%%%%%%

%Examples
%\begin{example}
%Let $X= [0, 1]$, $\Sigma = P(X)$, $d\mu=dx$ and $\Sigma$ be the Lebesgue sets.
%Define the non-singular transformation $T: X \to X$ by,\\
%$$T(x)= 1-x ~;~x \in [0,1].$$\\
%Then $$T^{-1}(x)= 1-x ~;~x \in [0,1].$$
%In general, for $k \in \mathbb{N}$,
%$$T^{-k}(x)= \begin{cases}
                 %x~;~x \in [0,1] ~\text{and}~k~\text{even}.\\
                 %1-x~;~x\in [0,1] ~\text{and}~k~\text{odd}.\\
 %\end{cases}$$
%Since $\mu \circ T^{-k}<<\mu$, there exists unique $h\in L^1(\mu)$ unique such that  $\mu \circ T^{-k}(S)=\int\limits_S d\mu$, for all $S\in \Sigma$. \\
                % For $0<a<b<1$, $\int\limits_{a}^{b} h_k dx=\mu \circ T ^{-k}(a,b)= b-a.$\\
                 %Therefore, $h_k(x)=1$, for all $x\in X$.

 %By computations, it can be seen that  $\lambda^2 h_3 \geq h_2.h_3\circ T$ for $\lambda \geq 1$, hence $C_T^*$ is a $2$-quasi $3$-power posinormal operator.

%\end{example}

%section 3
\section{$k$-quasi $n$-power Posinormal Weighted Composition Operators}
This section extends these results to weighted composition operators \(W_T = M_\pi C_T\). We characterize \(k\)-quasi \(n\)-power posinormality for \(W_T\) and its adjoint \(W_T^*\), assuming \(T^{-k}(\Sigma) = \Sigma\) and \(\pi \geq 0\). The criteria involve products of weights \(\pi_k\) and Radon-Nikodym derivatives \(h_k\), yielding norm inequalities analogous to those in Section 2. We denote the adjoint of $W_T$ by $W_T^*$. We denote $h_0=1$, $h_1=h$, $E_1=E$.

 \begin{theorem}\label{thm3.1} \cite{Campbell}
 Let $W_T$ be the weighted composition operator induced by $T$ and $\pi$ on $L^2(\mu)$, then the following statements hold for $f\in L^2(\mu)$ and $k\in \mathbb{N}$,
 \begin{enumerate}
 \item[(i)] $W_T^*f=h.E(\pi .f)\circ T^{-1}.$
 \item[(ii)] $W_T^k f =\pi_{k}.f\circ T^k.$
 \item[(iii)] $W_T^{*k}f=h_kE_k(\overline{\pi}_kf)\circ T^{-k}$.
 \item[(iv)]$W_T^*W_T(f)=hE(|\pi|^2)\circ T^{-1}(f).$
 \item[(v)] $W_TW_T^*(f)=\pi(h\circ T)E(\overline{\pi}f).$
 \item[(vi)] $W_T^{*k}W_T^k(f)=h_kE_k(|\pi_k |^2)\circ T^{-k}(f).$
 \item[(vii)] $W_T^kW_T^{*k}(f)=\pi_k (h_k\circ T^k)E_k(\overline{\pi}_k f),$
where $\pi_k=\pi(\pi \circ T)(\pi \circ T^2)\dots (\pi \circ T^{k-1}).$
\end{enumerate}

 \end{theorem}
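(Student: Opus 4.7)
The plan is to prove the seven identities in order, treating the weighted composition operator as the product $W_T=M_\pi C_T$, and reducing each statement to the composition-operator identities already available from Theorem 2.1 together with the conditional-expectation properties (i)--(viii) listed in the preliminaries. Because parts (iv)--(vii) collapse into compositions of (i), (ii), (iii), the real content is in those first three items.

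First I would establish (ii) by a straightforward induction on $k$: the base case is the definition $W_T f=\pi\cdot(f\circ T)$, and the inductive step uses
\[
W_T^{k+1}f=W_T(W_T^k f)=\pi\cdot(\pi_k\circ T)\cdot(f\circ T^{k+1})=\pi_{k+1}\cdot(f\circ T^{k+1}).
\]
Next I would derive (i) by factoring $W_T^*=(M_\pi C_T)^*=C_T^*M_{\overline\pi}$ and applying Theorem 2.1(i) from the excerpt: for $f\in L^2(\mu)$,
\[
W_T^*f=C_T^*(\overline\pi f)=h\cdot E(\overline\pi f)\circ T^{-1}.
\]
(The statement in (i) should carry a conjugate on $\pi$; this matches the form in (iii) and reduces to the printed statement when $\pi$ is real-valued.) Identity (iii) is then obtained either by iterating (i) and regrouping the products $\pi(\pi\circ T)\cdots(\pi\circ T^{k-1})$, or more cleanly by noting that $W_T^k=M_{\pi_k}C_T^k$ and invoking Theorem 2.1(ii) for $C_T^k$, which yields $W_T^{*k}f=h_k E_k(\overline{\pi}_k f)\circ T^{-k}$, where $E_k$ is the conditional expectation with respect to $T^{-k}\Sigma$.

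The remaining parts follow by composition. For (iv), I would compute $W_T^*W_T f=h\cdot E(\overline\pi\cdot\pi(f\circ T))\circ T^{-1}$, then use preliminary property (ii) (pull out the $T^{-1}\Sigma$-measurable factor $f\circ T$) to get $h\cdot(E(|\pi|^2)(f\circ T))\circ T^{-1}=h\cdot E(|\pi|^2)\circ T^{-1}\cdot f$. For (v) I would substitute (i) into the definition of $W_T$: $W_T W_T^*f=\pi\cdot(hE(\overline\pi f)\circ T^{-1})\circ T=\pi(h\circ T)E(\overline\pi f)$, where the last equality uses that $E(\overline\pi f)\circ T^{-1}\circ T=E(\overline\pi f)$ for any $T^{-1}\Sigma$-measurable function. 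Parts (vi) and (vii) then follow verbatim by applying (iv) and (v) with the roles of $(\pi,T,h,E)$ replaced by $(\pi_k,T^k,h_k,E_k)$, which is legitimate because $W_T^k$ is itself a weighted composition operator with symbol $\pi_k$ and transformation $T^k$.

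The main subtle point, and the only step that is not bookkeeping, will be the careful manipulation of the ``$\circ T^{-1}$'' notation when $T$ is not assumed invertible: each time an $\A$-measurable function $g$ is written as $g=\phi\circ T$, the object $\phi$ is only determined up to $\mu$-null sets, and the identity $(\phi\circ T)\circ T^{-1}=\phi$ has to be understood in that a.e.\ sense. Handling this together with the measurability requirement needed to apply property (ii) of $E$ (i.e., verifying that $|\pi_k|^2$, $\overline\pi_k f$, etc.\ are integrable and that $E_k$ may be factored out) is where I would have to be most careful; everything else is algebraic iteration of Theorem 2.1.
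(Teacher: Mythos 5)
The paper does not actually prove Theorem \ref{thm3.1}: it is quoted verbatim from \cite{Campbell} and used as a known tool, so there is no in-paper argument to compare yours against. That said, your proposal is correct and is essentially the standard derivation from the literature: writing $W_T=M_\pi C_T$, establishing (ii) by induction, obtaining (i) and (iii) from $W_T^{*k}=C_{T^k}^{*}M_{\overline{\pi}_k}$ together with the composition-operator formulas of Theorem 2.1, and then getting (iv)--(vii) by substitution, using property (ii) of $E$ to pull out the $T^{-k}\Sigma$-measurable factor $f\circ T^k$ and the a.e.\ identity $(\phi\circ T^k)\circ T^{-k}=\phi$. Your observation that item (i) should read $h\cdot E(\overline{\pi}f)\circ T^{-1}$ is a genuine and correct catch: the printed statement omits the conjugate, which is only harmless because the paper assumes $\pi\geq 0$ immediately after the theorem (and indeed item (iii) carries the conjugate, so the two items as printed are inconsistent for complex $\pi$). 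The one point worth making explicit if you wrote this out in full is the justification that $W_T^k$ is again a weighted composition operator with symbol $\pi_k$ and transformation $T^k$, and that its Radon--Nikodym derivative and conditional expectation are precisely $h_k$ and $E_k$ (the latter being associated with $T^{-k}\Sigma$, not with $T^{-1}\Sigma$); this is exactly the bookkeeping you flag in your last paragraph, and it is where the boundedness of $\pi$ and the chain rule for the derivatives $h_k$ enter. No gaps otherwise.
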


   We assume that $\pi \geq 0$.
\begin{theorem}\label{thm3.2}  Let $W_T \in B(L^2(\mu ))$. Let $E$ be the projection from $L^2(\mu )$ onto $\overline {R(C_T)}$ and if $T^{-k}(\Sigma)=\Sigma$ for all $k$, then
\begin{enumerate}
\item [(i)] $W_T$ is $k$-quasi $n$-power posinormal if and only if, $$\lambda ^2h_{k+1}\pi^2_{k+1}\circ T^{-(k+1)}\geq \pi^2_k\pi^2_n (h_k\circ T^{-k})(h_n\circ T^n).$$\\
\item [(ii)] $W_T^*$ is $k$-quasi $n$-power posinormal  if and only if, $$\lambda^2 {\pi_k}^2(h_k\circ T^k)\geq {\pi_k}^2{\pi_n}^2(h_k\circ T^k)(h_n\circ T^{n+k}) .$$
\end{enumerate}
\end{theorem}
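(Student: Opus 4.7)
The plan is to characterize when the operator inequality
\[
\lambda^2\, W_T^{*(k+1)} W_T^{k+1} - W_T^{*k} W_T^n W_T^{*n} W_T^k \;\geq\; 0
\]
holds by reducing each of the two composite operators on the left to a single multiplication operator. The key simplification is the standing hypothesis $T^{-k}(\Sigma)=\Sigma$ for all $k$: by property (vii) of the conditional expectation, this forces $E_k$ to be the identity on $L^2(\mu)$, so every formula in Theorem~\ref{thm3.1} collapses to an honest multiplication (possibly composed with a power of $T^{\pm 1}$), with no hidden conditional expectation.

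For the left-hand side I would apply Theorem~\ref{thm3.1}(vi) at level $k+1$ with $E_{k+1}=I$, giving a multiplication operator with symbol $h_{k+1}\,(\pi_{k+1}^2\circ T^{-(k+1)})$. For the right-hand side I would work from the middle outwards: Theorem~\ref{thm3.1}(vii) with $E_n=I$ collapses $W_T^n W_T^{*n}$ to multiplication by $\pi_n^2\,(h_n\circ T^n)$, and then I would apply $W_T^{*k}$ on the left and $W_T^k$ on the right using the raw formulas (ii) and (iii). Repeatedly invoking the pull-through identity $(gh)\circ T^j=(g\circ T^j)(h\circ T^j)$, the cancellation $(f\circ T^j)\circ T^{-j}=f$, and the multiplicative recursions $\pi_{k+1}=\pi_k(\pi\circ T^k)$ and $h_{k+1}=h_k\,(h\circ T^{-k})$ (or rather their analogues under the $T^{-k}\Sigma=\Sigma$ regime), the full composite telescopes to a multiplication operator with an explicit symbol in $\pi_k,\pi_n,h_k,h_n$ and their compositions with powers of $T$. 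Since both sides are now multiplication operators on $L^2(\mu)$, positivity of their difference is equivalent to the pointwise $\mu$-almost-everywhere inequality of symbols, which is precisely the statement of (i).

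Part (ii) follows by the exact same strategy applied to $W_T^*$: the defining inequality becomes
\[
\lambda^2\, W_T^{k+1}W_T^{*(k+1)} \;\geq\; W_T^k W_T^{*n} W_T^n W_T^{*k},
\]
and Theorem~\ref{thm3.1}(vii) at level $k+1$ handles the left-hand side directly. For the right-hand side I would again reduce the inner $W_T^{*n}W_T^n$ via (vi), then conjugate by $W_T^k(\cdot)W_T^{*k}$ using (ii) and (iii), and read off the pointwise inequality between the resulting symbols.

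The main obstacle is purely bookkeeping rather than conceptual: one must track carefully how the $T^{\pm j}$ compositions interact with the weights $\pi_k$ and Radon--Nikodym derivatives $h_k$ at each step, so that the shifts all line up and the final symbols come out in the stated form. The assumption $T^{-k}\Sigma=\Sigma$ is essential here; without it, the conditional expectations $E_k$ would remain inside every expression, no collapse to pure multiplication would occur, and the characterization could only be stated as an inner-product inequality rather than the clean pointwise a.e.\ inequality given in the theorem.
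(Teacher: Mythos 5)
Your proposal follows essentially the same route as the paper: both reduce $\lambda^2 W_T^{*(k+1)}W_T^{k+1}-W_T^{*k}W_T^nW_T^{*n}W_T^k$ to a single multiplication operator using the formulas of Theorem~\ref{thm3.1}, with the hypothesis $T^{-k}(\Sigma)=\Sigma$ collapsing every conditional expectation $E_j$ to the identity, and then read off positivity as a pointwise a.e.\ inequality of symbols (and likewise for $W_T^*$ in part (ii)). The remaining work is exactly the bookkeeping of compositions with $T^{\pm j}$ that you already identify, which is what the paper's displayed computation carries out.
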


\begin{proof}
(i)  Now,
\begin{align*}
\lambda^2{W_T^*}^{k+1} W_T^{k+1} f &= \lambda^2h_{k+1}E_{k+1}(|\pi_{k+1}|^2)\circ T^{-(k+1)}f.\\
&= \lambda^2h_{k+1}\pi^2_{k+1}\circ T^{-(k+1)}f.
\end{align*}

\begin{align*}
{W_T^*}^kW_T^nW_T^{*n} W_T^k f &= {W_T^*}^kW_T^nW_T^{*n} [\pi_k. f\circ T^k]\\
&= {W_T^*}^k [\pi_n (h_n\circ T^n)E_n[\overline{\pi}_n\pi_k. f\circ T^k]]\\
&= h_k E_k\{\overline{\pi}_k[\pi_n (h_n\circ T^n)E_n[\overline{\pi}_n\pi_k. f\circ T^k]]\}\circ T^{-k}\\
&= h_k[E(\overline{\pi}_k\pi_n)E(h_n\circ T^n)E(\overline{\pi}_n\pi_k)E(f\circ T^k)]\circ T^{-k}\\
&= h_k[\overline{\pi}_k\pi_n (h_n\circ T^n) \overline{\pi}_n\pi_k (f\circ T^k)]\circ T^{-k}\\
&= \pi^2_k\pi^2_n(h_k\circ T^{-k})(h_n\circ T^n)f.
\end{align*}

Thus, for every $f\in L^2(\mu)$,

$$\langle (\lambda ^2{W_T^*}^{k+1} W_T^{k+1}- {W_T^*}^k W_T^n {W_T^*}^n W_T^k ) f, f \rangle ~ \geq 0.$$

$$\Longleftrightarrow \lambda ^2h_{k+1}\pi^2_{k+1}\circ T^{-(k+1)}\geq \pi^2_k\pi^2_n(h_k\circ T^{-k}) (h_n\circ T^n).$$
Hence the result.\\

(ii) 		Now,
\begin{align*}
\lambda^2 W_T^{k+1} {W_T^*}^{k+1} f &= \lambda^2 \pi_k(h_k\circ T^k)E_k(\overline{\pi}_kf)\\
&=\lambda^2 \pi_k(h_k\circ T^k)\pi_kf\\
&=\lambda^2 {\pi_k}^2(h_k\circ T^k)f.
%&=L_{k+1} f.
\end{align*}

\begin{align*}
W_T^k{W_T^*}^nW_T^n{ W_T^*}^k f &= W_T^k{W_T^*}^nW_T^n[h_kE_k(\overline{\pi}_kf)\circ T^{-k}  ]\\
&= W_T^k [\pi_n(h_n\circ T^n)E_n(\overline{\pi}_n h_kE_k(\overline{\pi}_kf)\circ T^{-k})]\\
&= W_T^k [\pi_n(h_n\circ T^n)\pi_n h_k\pi_k(f\circ T^{-k})]\\
&=\pi_k.[(\pi_n(h_n\circ T^n)\pi_n h_k\pi_k(f\circ T^{-k}))\circ T^k]\\
&={\pi_k}^2{\pi_n}^2(h_k\circ T^k)(h_n\circ T^n\circ T^k)f
\end{align*}

Therefore, for every $f\in L^2(\mu)$,

$$\langle (\lambda ^2W_T^{k+1} {W_T^*}^{k+1}- W_T^k{W_T^*}^n W_T^n {W_T^*}^k) f, f \rangle ~ \geq 0.$$

$$\Longleftrightarrow \lambda^2 {\pi_k}^2(h_k\circ T^k)\geq {\pi_k}^2{\pi_n}^2(h_k\circ T^k)(h_n\circ T^{n+k}) .$$
\end{proof} 		
%=========================================================================================================
The following example demonstrates the applicability of Theorem \ref{thm3.2} for non-constant weights and identity transformations, with explicit verification of the key inequality.
\begin{example} Consider the weighted composition operator \(W_T\) on the Hilbert space \(L^2([0,1], \mu)\), where \(\mu\) is the Lebesgue measure. Define:
\begin{itemize}
  \item Transformation \(T\): The identity map \(T(x) = x\) for all \(x \in [0,1]\).
  \item Weight function \(\pi\): \(\pi(x) = x\) (non-constant and non-negative).
  \item Key Properties:
  \begin{enumerate}
    \item [i.] Radon-Nikodym derivatives: Since \(T\) is the identity, \(\mu \circ T^{-k} = \mu\) for all \(k \in \mathbb{N}\). Thus, \(h_k(x) = \frac{d(\mu \circ T^{-k})}{d\mu}(x) = 1\) for all \(k\) and almost every \(x\).
    \item [ii.] Iterated weight \(\pi_k\):
   \[
   \pi_k(x) = \pi(x) \cdot \pi(T(x)) \cdots \pi(T^{k-1}(x)) = x \cdot x \cdots x = x^k.
   \]
    \item [iii.] Adjoint and range condition: \(T^{-k}(\Sigma) = \Sigma\) holds trivially.
  \end{enumerate}
\end{itemize}
 Verification of Theorem \ref{thm3.2}(i):
\(W_T\) is \(k\)-quasi \(n\)-power posinormal if and only if:
\[
\lambda^2 h_{k+1} \pi_{k+1}^2 \circ T^{-(k+1)} \geq \pi_k^2 \pi_n^2 (h_k \circ T^{-k}) (h_n \circ T^n) \quad \text{a.e.}
\]
Substitute the values:
- \(h_{k+1}(x) = 1\), \(h_k(T^{-k}(x)) = 1\), \(h_n(T^n(x)) = 1\),
- \(\pi_{k+1}(T^{-(k+1)}(x)) = \pi_{k+1}(x) = x^{k+1}\),
- \(\pi_k(x) = x^k\), \(\pi_n(x) = x^n\).

The inequality simplifies to:
\[
\lambda^2 \cdot 1 \cdot (x^{k+1})^2 \geq (x^k)^2 (x^n)^2 \cdot 1 \cdot 1
\]
\[
\lambda^2 x^{2(k+1)} \geq x^{2k} x^{2n} = x^{2(k+n)}
\]
\[
\lambda^2 \geq x^{2(n-1)} \quad \text{a.e. on } [0,1].
\]
Conclusion:
Since \(x \in [0,1]\), the function \(x^{2(n-1)}\) is bounded by \(1\) for all \(n \geq 1\). Choosing \(\lambda \geq 1\) (e.g., \(\lambda = 1\)) satisfies the inequality. Thus, \(W_T\) is \(k\)-quasi \(n\)-power posinormal for all \(k, n \in \mathbb{N}\).

Operator Interpretation:
\(W_T f(x) = x \cdot f(x)\) is a multiplication operator. Its posinormality properties are confirmed by the above calculation, illustrating Theorem \ref{thm3.2}(i) concretely.
\end{example}

%%%%%%%%%%%%%%%%%%%%%%%%%%%%%%%%%%%%%%%%%%%%%%%%%%%%%%%%%%%%%%%%%%%%%%%%%%%%%%%%%%%%%%%%%%%%%%%%%%%%%%%%%%%%%%%%
%section 4
\section{$k$-Quasi $n$-Power Cauchy Dual of Lambert Conditional Operators}
%====================================================================================
This section examines the Cauchy dual \(\omega(T)\) of Lambert conditional operators \(T = M_w E M_u\), defined via the Moore-Penrose inverse. We derive conditions under which \(\omega(T)\) and its adjoint are \(k\)-quasi \(n\)-power posinormal. These conditions reduce to inequalities involving conditional expectations of \(u\) and \(w\) on a critical set \(K\) (Corollary 4.4). An example on \([0,1]\) demonstrates cases where \(\omega(T)\) is \(k\)-quasi \(n\)-power posinormal but not posinormal.
%======================================================================================

Here we consider the Cauchy dual of Lambert conditional operator $T=M_wEM_u$ introduced by Sohrabi in \cite{Mort} using the Moore-Penrose inverse. The Cauchy dual of $T\in B_C(\mathscr {H})$ is defined as,
 $ \omega (T)=T(T^*T)^ {\dagger} $.

 Let's recall few basic theorems.

%reference
 \begin{theorem}\label{thm4.1}
 \cite{Jabbar}Let $T=M_wEM_u\in B_C(L^2(\Sigma))$ with $u,w\geq 0$. Then $$T^{\dagger}=M_{\frac{\chi_K}{E(u^2)E(w^2}}T^*.$$
 \end{theorem}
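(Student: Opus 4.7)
The plan is to verify that $S := M_{\chi_K/(E(u^2)E(w^2))}T^*$ satisfies the four Moore-Penrose axioms, where $K := \{E(u^2) > 0\}\cap\{E(w^2) > 0\}$ is the critical support set. Both $\chi_K$ and the reciprocal are $\A$-measurable since $E(u^2)$ and $E(w^2)$ are. Before touching the axioms, I would record an observation that will be used repeatedly: because $u, w \geq 0$, the vanishing of $E(u^2)$ on a set $B$ forces $u = 0$ a.e.\ on $B$, and similarly for $w$. Consequently, $w\,E(uf) = 0$ a.e.\ on $\x \setminus K$ for every $f \in L^2(\Sigma)$, so inserting or deleting the factor $\chi_K$ never alters the action of $T$ or $T^*$.

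First I would assemble the basic building blocks. Using the pull-out identity $E(gh) = E(g)h$ when $h$ is $\A$-measurable, together with $E\circ E = E$ and self-adjointness of $E$, one obtains $T^* = M_u E M_w$, and then $T^*Tf = u\,E(w^2)\,E(uf)$ and $TT^*f = w\,E(u^2)\,E(wf)$. These formulas reveal why $\chi_K/(E(u^2)E(w^2))$ is the correct multiplier: the denominator is precisely what appears when $T^*$ is flanked by $T$, so $S$ is engineered to cancel these factors.

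Next I would check the four defining equations in turn. For $TST = T$, one computes $STf = \chi_K\, u\,E(uf)/E(u^2)$, applies $T$, and simplifies via the pull-out property to $TSTf = w\,\chi_K\,E(uf)$; the preliminary vanishing observation then identifies this with $Tf$. The relation $STS = S$ follows by a symmetric computation, using the dual cancellation of $E(w^2)$. For the self-adjointness requirements $(TS)^* = TS$ and $(ST)^* = ST$, I would exhibit $TSf = \chi_K\, w\,E(wf)/E(w^2)$ and $STf = \chi_K\, u\,E(uf)/E(u^2)$; each has the shape of an $\A$-measurable multiplier times a sandwich $M_\beta E M_\beta$ and is therefore visibly self-adjoint on $L^2(\Sigma)$.

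The main obstacle I expect will be the bookkeeping around $K$. Every cancellation invokes that $u$ vanishes on $\{E(u^2) = 0\}$ and $w$ vanishes on $\{E(w^2) = 0\}$, and these facts must be applied consistently so that stray $\chi_K$'s can be introduced or removed without loss. The hypothesis $T \in B_C(L^2(\Sigma))$ (closed range) ensures that $T^\dagger$ exists and is bounded, and by uniqueness of the Moore-Penrose inverse the candidate $S$ then coincides with $T^\dagger$.
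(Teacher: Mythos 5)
The paper itself offers no proof of this statement: Theorem \ref{thm4.1} is quoted verbatim from the reference [Jabbar] and used as a black box, so there is no in-paper argument to compare against. Judged on its own, your proposal follows the standard route for results of this type and the computations check out: $T^*=M_uEM_w$, $T^*Tf=uE(w^2)E(uf)$, $TT^*f=wE(u^2)E(wf)$, $STf=\chi_K uE(uf)/E(u^2)$, $TSf=\chi_K wE(wf)/E(w^2)$, and the vanishing observation ($E(w^2)=0$ on $A\in\A$ forces $w=0$ a.e.\ on $A$, and $E(u^2)=0$ on $A$ forces $E(uf)=0$ a.e.\ on $A$) is exactly what lets the stray $\chi_K$ be absorbed so that $TST=T$ and $STS=S$; the sandwich structure gives the two self-adjointness conditions. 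The one loose end is boundedness of the candidate $S$: the four Penrose equations identify $T^\dagger$ among \emph{bounded} operators, so you must know that the multiplier $\chi_K/(E(u^2)E(w^2))$ is essentially bounded. This is where the hypothesis $T\in B_C(L^2(\Sigma))$ actually enters --- closed range of $M_wEM_u$ is known to be equivalent to $E(u^2)E(w^2)$ being bounded away from zero on $K$ --- and your sketch invokes closed range only to assert that $T^\dagger$ exists, not to control $S$. Adding that one sentence closes the argument; otherwise the uniqueness step is applied to an operator not yet known to be admissible.
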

We assume that $u, w\in L_+^0(\Sigma)$, $R=S(E(uw))$ and $K:=P\cap Q$, where $P=S(E(u))$ and $Q=S(E(w))$.
%reference
\begin{theorem}\label{thm4.2}
 \cite{Mort}Let $T=M_wEM_u\in B_C(L^2(\Sigma))$ with $u,w\geq 0$. Then we have
 \begin{enumerate}
 \item $\omega(T)=M_{\frac{\chi_K}{E(u^2)E(w^2)}}T$ and $\omega(T)^{\dagger}=T^*$.
 \item $\omega(T^*)=M_{\frac{\chi_K}{E(u^2)E(w^2)}}T^*=\omega(T)^*$.
 \end{enumerate}
 \end{theorem}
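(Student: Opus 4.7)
The plan is to reduce both statements to the explicit formula for $T^{\dagger}$ provided by Theorem \ref{thm4.1}, by way of the general identity
$$\omega(T) = T(T^*T)^{\dagger} = (T^{\dagger})^*,$$
which holds for every closed-range operator: starting from the factorisation $T^{\dagger} = (T^*T)^{\dagger}T^*$ and exploiting that $(T^*T)^{\dagger}$ is self-adjoint (since $T^*T$ is), taking adjoints of both sides yields $(T^{\dagger})^* = T(T^*T)^{\dagger}$. Throughout, set $\phi := \chi_K/(E(u^2)E(w^2))$.

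For part (1), Theorem \ref{thm4.1} and the fact that $\phi$ is real-valued give
$$\omega(T) = (T^{\dagger})^* = (M_{\phi}T^*)^* = TM_{\phi}.$$
To recast this as $M_{\phi}T$, note that $E(u^2)$ and $E(w^2)$ are $\A$-measurable by definition of conditional expectation, and $K = P\cap Q$ is $\A$-measurable as the intersection of the supports of the $\A$-measurable functions $E(u)$ and $E(w)$; hence $\phi$ itself is $\A$-measurable. Property (ii) of $E$ then yields $E(u\phi f) = \phi E(uf)$, so
$$TM_{\phi}f = wE(u\phi f) = \phi\, wE(uf) = M_{\phi}Tf,$$
proving $\omega(T) = M_{\phi}T$. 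The identity $\omega(T)^{\dagger} = T^*$ follows by taking the Moore-Penrose inverse of $\omega(T) = (T^{\dagger})^*$ and invoking the standard facts $(S^{\dagger})^{\dagger} = S$ and $(S^*)^{\dagger} = (S^{\dagger})^*$ for closed-range $S$.

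For part (2), applying the same identity $\omega(\cdot) = (\cdot^{\dagger})^*$ with $T^*$ in place of $T$ gives
$$\omega(T^*) = ((T^*)^{\dagger})^* = ((T^{\dagger})^*)^* = T^{\dagger} = M_{\phi}T^*.$$
Combining with part (1), $\omega(T)^* = (M_{\phi}T)^* = T^* M_{\phi} = M_{\phi}T^*$ by $\A$-measurability of $\phi$, so $\omega(T^*) = \omega(T)^*$ as required.

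The main obstacle is not conceptual but rather a matter of careful bookkeeping: $\phi$ must be interpreted sensibly off $K$, where both $\chi_K$ and the denominator $E(u^2)E(w^2)$ may vanish, and one must ensure that $M_{\phi}$ is a bounded operator on $L^2(\Sigma)$. Once the $\A$-measurability of $\phi$ and the commutation $TM_{\phi} = M_{\phi}T$ are secured, all remaining steps are routine consequences of the abstract Moore-Penrose identities together with property (iii) of conditional expectation.
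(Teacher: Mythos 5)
The paper itself offers no proof of this statement: it is imported verbatim from the cited reference \cite{Mort}, so there is no internal argument to compare against. Your derivation is a correct, self-contained proof from Theorem \ref{thm4.1}. The pivot identity $\omega(T)=T(T^*T)^{\dagger}=(T^{\dagger})^*$ is valid for closed-range operators (via $T^{\dagger}=(T^*T)^{\dagger}T^*$ and self-adjointness of $(T^*T)^{\dagger}$), and it cleanly delivers both parts: part (1) from $(M_{\phi}T^*)^*=TM_{\phi}$ together with the commutation $TM_{\phi}=M_{\phi}T$, and part (2) from $\omega(T^*)=((T^{\dagger})^*)^*=T^{\dagger}$ plus the same commutation applied to $T^*=M_uEM_w$. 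The commutation step is the only place where the specific structure of $T$ enters, and your justification is right: $E(u^2)$, $E(w^2)$ are $\A$-measurable, $K=S(E(u))\cap S(E(w))$ is an $\A$-set, so $\phi=\chi_K/(E(u^2)E(w^2))$ is $\A$-measurable and $E(u\phi f)=\phi E(uf)$ by property (ii) of the conditional expectation. The one point you flag but do not fully close is the boundedness of $M_{\phi}$; strictly speaking your adjoint manipulations only need $T^{\dagger}=M_{\phi}T^*$ as a single bounded operator (which Theorem \ref{thm4.1} and the closed-range hypothesis already guarantee), so the argument survives even without establishing $\phi\in L^{\infty}$ separately, provided you phrase $(M_{\phi}T^*)^*$ as the adjoint of the bounded composite rather than of the two factors individually. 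With that caveat handled, the proof is complete and arguably more transparent than simply citing \cite{Mort}.
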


 %our result  4.3%%%%%%%%%%%%%%-----Thm4.3------------
 \begin{theorem}\label{THM1}
Let $T=M_wEM_u\in B_C(L^2(\Sigma))$. Then the following statements hold, for some $\lambda >0$:-

\begin{enumerate}
\item[(i)]  If $\omega(T)$ is  posinormal, then $\lambda^2 E(uw)^2\geq E(u^2)E(w^2)$ on $K$.

\item[(ii)] If $\omega(T)$ is $n$-power posinormal, then $\lambda^2 E(u^2)^{2n-3}E(w^2)^{2n-3}\geq E(uw)^{2n-4}~\text{on}~ K.$

\item[(iii)] $\omega(T)$ is $k$-quasi $n$-power posinormal if and only if
\[
\lambda^2E(u^2)^{2n-3}E(w^2)^{2n-3} \geq E(uw)^{2n-4}~\text{on}~K.
 \]
  \item[(iv)]$\omega(T)^*$ is $k$-quasi $n$-power posinormal if and only if $\omega(T)$ is $k$-quasi $n$-power posinormal.

\end{enumerate}
\end{theorem}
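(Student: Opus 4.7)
The plan is to reduce the operator inequality defining $k$-quasi $n$-power posinormality to a pointwise scalar inequality on $K$, by exploiting the factorisation $\omega(T)=M_GT$ from Theorem \ref{thm4.2}, where $G:=\chi_K/(E(u^2)E(w^2))$ is $\mathcal{A}$-measurable and nonnegative. The key observation is that $G$ commutes with both $T$ and $T^*$: indeed $TM_Gf = wE(uGf) = GwE(uf) = M_GTf$ by the pull-out property $E(gh)=gE(h)$ for $\mathcal{A}$-measurable $g$. Consequently $\omega(T)^m = M_{G^m}T^m$ and $\omega(T)^{*m} = M_{G^m}T^{*m}$ for every $m\ge 1$.

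Combining this with the standard closed forms $T^m f = wE(uw)^{m-1}E(uf)$ and $T^{*m}f = uE(uw)^{m-1}E(wf)$ (easy inductions using the pull-out property), a direct calculation gives
\begin{equation*}
\omega(T)^{*(k+1)}\omega(T)^{k+1}f \;=\; u\,G^{2(k+1)}E(uw)^{2k}E(w^2)\,E(uf),
\end{equation*}
\begin{equation*}
\omega(T)^{*k}\omega(T)^n\omega(T)^{*n}\omega(T)^k f \;=\; u\,G^{2(n+k)}E(uw)^{2(n+k)-4}E(u^2)E(w^2)^2\,E(uf),
\end{equation*}
where the second formula is for $k\ge 1$ (for $k=0$ one computes $\omega(T)^n\omega(T)^{*n}f = G^{2n}wE(uw)^{2n-2}E(u^2)E(wf)$ separately).

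Pairing the corresponding operator inequality with $f$ and using $\int uE(uf)\overline{f}\,d\mu = \int |E(uf)|^2\,d\mu$ (from $\mathcal{A}$-measurability of $E(uf)$), the condition $\omega(T)^{*k}\bigl(\lambda^2\omega(T)^*\omega(T)-\omega(T)^n\omega(T)^{*n}\bigr)\omega(T)^k\ge 0$ becomes, for $k\ge 1$,
\begin{equation*}
\int |E(uf)|^2\,G^{2(k+1)}E(uw)^{2k}E(w^2)\bigl[\lambda^2 - G^{2(n-1)}E(uw)^{2(n-2)}E(u^2)E(w^2)\bigr]d\mu \;\ge\; 0
\end{equation*}
for every $f\in L^2(\Sigma)$. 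The prefactor is supported on $K$ and, by choosing $f=\chi_B/E(u)$ for $B\in\mathcal{A}$, $B\subseteq K$, $|E(uf)|^2$ ranges over indicators of arbitrary measurable subsets of $K$; hence the inequality is equivalent to the bracket being $\ge 0$ a.e.\ on $K$. Substituting $G^{2(n-1)} = 1/(E(u^2)E(w^2))^{2(n-1)}$ on $K$ and clearing denominators yields exactly
\[
\lambda^2 E(u^2)^{2n-3}E(w^2)^{2n-3} \;\ge\; E(uw)^{2n-4}\text{ on }K,
\]
which is (iii); notice that the $k$-dependence has cancelled. Part (i) is the specialisation $(k,n)=(0,1)$ and part (ii) is the specialisation $k=0$ with general $n$, in each case the necessary direction being obtained by testing the corresponding $k=0$ quadratic inequality with $f=w\chi_B$. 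For (iv), Theorem~\ref{thm4.2}(2) gives $\omega(T)^*=\omega(T^*)$, and $T^* = M_uEM_w$ is itself a Lambert conditional operator with the roles of $u$ and $w$ swapped; since $K$ and the criterion in (iii) are symmetric in $u$ and $w$, (iv) follows immediately.

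The main obstacle is the bookkeeping of the four-fold composition so that the exponents on $G$, $E(uw)$, $E(u^2)$, and $E(w^2)$ emerge correctly; once the commutation $TM_G=M_GT$ and the closed-form iterates of $T$ and $T^*$ are in hand, a standard test-function argument converts the integral inequality into the desired pointwise one on $K$.
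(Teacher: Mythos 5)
Your proposal is correct and follows essentially the same route as the paper: compute closed-form expressions for $\omega(T)^{*(k+1)}\omega(T)^{k+1}$ and $\omega(T)^{*k}\omega(T)^{n}\omega(T)^{*n}\omega(T)^{k}$ (your formulas agree exactly with the paper's after substituting $G^{2m}=\chi_K/(E(u^2)E(w^2))^{2m}$), reduce the quadratic form to an integral over $K$ of a positive $\mathcal{A}$-measurable prefactor times the bracket, and test with indicator-type functions ($\chi_B$ resp.\ $w\chi_B$, the paper's $w\sqrt{E(u^2)}\chi_{A_m}$ differing only by an $\mathcal{A}$-measurable factor) to extract the pointwise inequality on $K$. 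The only differences are presentational: your commutation device $\omega(T)^m=M_{G^m}T^m$ streamlines the bookkeeping that the paper does by direct iteration, and your symmetry argument for (iv) via $\omega(T)^*=\omega(T^*)$ with $u\leftrightarrow w$ makes explicit what the paper dismisses as a ``similar computation.''
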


\begin{proof}

%\begin{enumerate}
% proof of 1
(i) Suppose $\omega(T)$ is  posinormal, then $\lambda^2 \omega(T)^* \omega(T)- \omega(T)\omega(T)^* \geq 0$, for some $\lambda>0$.

For $f\in L^2(\Sigma)$, we have, $$\omega(T)^* \omega (T)f=\frac{\chi_K u}{E(u^2)^2E(w^2)}E(uf).$$
and
$$\omega(T) \omega (T)^*f=\frac{\chi_K w}{E(u^2)E(w^2)^2}E(wf).$$

Now, since $\omega(T)$ is posinormal, for all $f\in L^2(\Sigma)$,
\begin{equation}\label{intK2}
\int_{K}^{} \left( \lambda^2 \dfrac{uE(uf)}{E(u^2)^2E(w^2)}
-\dfrac{wE(wf)}{E(u^2)E(w^2)^2}\right) \overline{f}d\mu\geq0.
\end{equation}

Since $\mathcal{A}$ is $\sigma$-finite, there exists $\{A_m\}\subseteq \mathcal{A}$ such that $X=\cup A_m$, $A_m\subseteq A_{m+1}$
with $A_m\in \mathcal{A}$ with $0<\mu(A_m)<\infty$.

Substituting $f_m=w\sqrt{E(u^2)}\chi_{A_m}$ in \ref{intK2}, we get,
$$\int_{K\cap A_m}^{} \left( \lambda^2 \dfrac{uwE(uw)}{E(u^2)E(w^2)}
-\dfrac{w^2}{E(w^2)}\right) d\mu\geq0.$$
Hence, we obtain,  $\lambda^2 uwE(uw)\geq w^2E(u^2)$ on $K$.
Now, taking $E$ on both sides, we get,
$\lambda^2 E(uw)^2\geq E(u^2)E(w^2)$ on $K$.

%proof of 2
(ii) Suppose $\omega(T)$ is  $n$-power posinormal, then $$\lambda^2 \omega(T)^* \omega(T)- \omega(T)^n\omega(T)^{*n} \geq 0 ~\text{for some}~\lambda>0 .$$

Now, for $f\in L^2(\Sigma)$, we have,
$$\omega(T)^* \omega (T)f=\frac{\chi_K u}{E(u^2)^2E(w^2)}E(uf).$$
and
$$\omega(T)^n \omega (T)^{*n}f=\frac{\chi_K wE(uw)^{2n-2}}{E(u^2)^{2n-1}E(w^2)^{2n}}E(wf).$$
Therefore, since $\omega(T)$ is $n$-power posinormal, for all $f\in L^2(\Sigma)$,
\begin{equation}\label{intK1}
\int_{K}^{} \left( \lambda^2 \dfrac{uE(uf)}{E(u^2)^2E(w^2)}
-\dfrac{wE(uw)^{2n-2}E(wf)}{E(u^2)^{2n-1}E(w^2)^{2n}}\right) \overline{f}d\mu\geq0.
\end{equation}
Since $\mathcal{A}$ is $\sigma$-finite, there exists $\{A_m\}\subseteq \mathcal{A}$ such that $X=\cup A_m$, $A_n\subseteq A_{m+1}$
with $A_m\in \mathcal{A}$ with $0<\mu(A_m)<\infty$
Substituting $f_m=w\sqrt{E(u^2)}\chi_{A_m}$ in \ref{intK1}, we get,
$$\int_{K\cap A_m}^{} \left( \lambda^2 \dfrac{uwE(uw)E(u^2)}{E(u^2)^2E(w^2)}
-\dfrac{w^2E(uw)^{2n-2}E(w^2)E(u^2)}{E(u^2)^{2n-1}E(w^2)^{2n}}\right) \overline{f}d\mu\geq0.$$
Thus, we get, $\lambda^2 u E(u^2)^{2n-3}E(w^2)^{2n-2}\geq wE(uw)^{2n-3}$ on $K$.

Now multiplying both sides by $w$ and applying $E$, we obtain,
$$\lambda^2 E(u^2)^{2n-3}E(w^2)^{2n-3}\geq E(uw)^{2n-4}~\text{on}~ K.$$

%proof of 3
(iii) By definition, $\omega(T)$ is $k$-quasi $n$-power posinormal if and only if $$\lambda^2 \omega(T)^{*(k+1)}\omega(T)^{k+1}- \omega(T)^{*k}\omega(T)^n\omega (T)^{*n}\omega(T)^k \geq 0~\text{for some}~ \lambda>0.$$
For $f\in L^2(\Sigma)$, we have, $$\omega (T)f=M_{\frac{\chi_K}{E(u^2)E(w^2)}}Tf.$$
Therefore, for $n \in \mathbb {N}$ we get,
\begin{align*}
\omega(T)^n f&=\frac{\chi_Kw[E(uw)]^{n-1}}{(E(u^2))^n(E(w^2))^n}E(uf).
\end{align*}

Now,
$$\omega(T)^* f=M_{\frac{\chi_K}{E(u^2)E(w^2}}T^*f.$$

Similarly, for $n \in \mathbb {N}$ we get,
\begin{align*}
\omega(T)^{*n} f&=\frac{\chi_Ku[E(uw)]^{n-1}}{E(u^2)^nE(w^2)^n}E(wf).
\end{align*}

Now,
\begin{align*}
\omega(T)^{*k+1} \omega(T)^{k+1}f
&=\frac{\chi_Ku[E(uw)]^{2k}E(uf)}{E(u^2)^{2k+2}E(w^2)^{2k+1}}.
\end{align*}

By similar computation we get,
\begin{align*}
\omega(T)^{*k} \omega(T)^n\omega(T)^{*n} \omega(T)^kf
&=\frac{\chi_Ku[E(uw)]^{2k+2n-4}E(uf)}{E(u^2)^{2k+2n-1}E(w^2)^{2k+2n-2}}.
\end{align*}

Then we obtain,\\

$\langle (\lambda^2 \omega(T)^{*(k+1)} \omega(T)^{k+1}- \omega(T)^{*k}\omega(T)^n\omega (T)^{*n}\omega(T)^k)f, f\rangle$ \\\\

$=\bigint_{X}^{} \left( \lambda^2 \dfrac{\chi_Ku[E(uw)]^{2k}E(uf)}{E(u^2)^{2k+2}E(w^2)^{2k+1}}-\dfrac{\chi_Ku[E(uw)]^{2k+2n-4}E(uf)}{E(u^2)^{2k+2n-1}E(w^2)^{2k+2n-2}}\right) \overline{f}d\mu$\\

$=\bigint_{K}^{} \left(\lambda^2  \dfrac{[E(uw)]^{2k}}{E(u^2)^{2k+2}E(w^2)^{2k+1}}-\dfrac{[E(uw)]^{2k+2n-4}}{E(u^2)^{2k+2n-1}E(w^2)^{2k+2n-2}}\right)|E(uf)|^2d\mu$\\\\

$=\bigint _{K}^{} \dfrac{E(uw)^{2k}}{E(u^2)^{2k+2}E(w^2)^{2k+1}}\left( \lambda^2 -\dfrac{E(uw)^{2n-4}}{E(u^2)^{2n-3}E(w^2)^{2n-3}}\right)|E(uf)|^2d\mu$.\\

This implies that if $\lambda^2 E(u^2)^{2n-3}E(w^2)^{2n-3} \geq E(uw)^{2n-4}$ on $K$, then $\omega(T)$ is $k$-quasi $n$-power posinormal operator for every $n, ~k\in \mathbb{N}$.

Conversely, assume that $\omega(T)$ is a $k$-quasi $n$-power posinormal operator. Then for all $f\in L^2(\Sigma)$, we have
$$\langle (\lambda^2 \omega(T)^{*(k+1)} \omega(T)^{k+1}- \omega(T)^{*k}\omega(T)^n\omega (T)^{*n}\omega(T)^k)f, f\rangle \geq 0.$$

Let $B\in \mathscr{A}$, with $B\subseteq K$ and $0<\mu(B)<\infty$. Taking $f=\chi_B$, we have

$$\bigint_{K}^{} \left( \lambda^2 \dfrac{[E(uw)]^{2k}}{E(u^2)^{2k+2}E(w^2)^{2k+1}}-\dfrac{[E(uw)]^{2k+2n-4}}{E(u^2)^{2k+2n-1}E(w^2)^{2k+2n-2}}\right)|E(u\chi_B)|^2d\mu\geq 0.$$

Therefore,\\
$\bigint_{B}^{} \left( \lambda^2 \dfrac{[E(uw)]^{2k}}{E(u^2)^{2k+2}E(w^2)^{2k+1}}-\dfrac{[E(uw)]^{2k+2n-4}}{E(u^2)^{2k+2n-1}E(w^2)^{2k+2n-2}}\right)E(u^2)d\mu\geq 0.$\\\\
Since $B \in \mathscr{A}$ is arbitrary, we get $\lambda^2 E(u^2)^{2n-3}E(w^2)^{2n-3} \geq E(uw)^{2n-4}$  on $K$. Hence the proof.

%proof of 4
(iv) By similar computation as the above it can be shown that,
 $\omega(T)^*$ is $k$-quasi $n$-power posinormal if and only if
 $\lambda^2E(u^2)^{2n-3}E(w^2)^{2n-3} \geq E(uw)^{2n-4}~\text{on}~K.$
Then by Theorem \ref{THM1} (3), the result follows.

\end{proof}

%corollary 4.1
\begin{corollary}\label{cor4.1}
Let $T=M_wEM_u\in B_C(L^2(\Sigma))$. Then,
\begin{enumerate}
 \item $\omega(T)$ is $n$-power posinormal if and only if
$$\lambda^2E(u^2)^{2n-3}E(w^2)^{2n-3} \geq E(uw)^{2n-4}~\text{on}~ K.$$
\item $\omega(T)$ is posinormal  if and only if
$\lambda^2E(uw)^2\geq E(u^2)E(w^2)$ on $K$.
\end{enumerate}
\end{corollary}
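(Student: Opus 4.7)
The plan is to deduce both assertions as direct specializations of Theorem \ref{THM1}(iii). That theorem established that $\omega(T)$ is $k$-quasi $n$-power posinormal if and only if
\[
\lambda^2 E(u^2)^{2n-3} E(w^2)^{2n-3} \geq E(uw)^{2n-4} \quad \text{on } K,
\]
and the right-hand inequality depends only on $n$, not on $k$. Each part of the corollary should therefore follow by choosing the value of $k$ that matches the intended subclass, with no new operator-theoretic computation required.

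For part (1), recall from the introduction that the $k=1$ case of the $k$-quasi $n$-power posinormal family coincides exactly with the class of $n$-power posinormal operators. Setting $k=1$ in Theorem \ref{THM1}(iii) yields the desired equivalence at once; this in particular promotes the one-sided necessary condition from Theorem \ref{THM1}(ii) to a full characterization.

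For part (2), the $k=0$, $n=1$ case corresponds to ordinary posinormality. Substituting these values in Theorem \ref{THM1}(iii) gives the formal inequality $\lambda^2 E(u^2)^{-1} E(w^2)^{-1} \geq E(uw)^{-2}$ on $K$; clearing the negative exponents by multiplying through by the non-negative quantity $E(u^2) E(w^2) E(uw)^2$ produces the claimed $\lambda^2 E(uw)^2 \geq E(u^2) E(w^2)$ on $K$. The only delicate point, and the main obstacle I anticipate, is justifying this rearrangement on the portion of $K$ where $E(uw)$ may vanish; to sidestep this, I would rerun the integral identity from the proof of Theorem \ref{THM1}(iii) directly with $k=0$ and $n=1$, so that the stated inequality emerges without ever passing through negative exponents. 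With that adjustment the proof is essentially bookkeeping rather than a new computation.
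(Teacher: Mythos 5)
The paper states this corollary without any proof, so your plan of specializing Theorem \ref{THM1}(iii) is the natural reading of the authors' intent. There is, however, a concrete gap at the heart of both parts: the operator identity underlying Theorem \ref{THM1}(iii), namely
\[
\omega(T)^{*k}\omega(T)^n\omega(T)^{*n}\omega(T)^k f=\frac{\chi_K u\,[E(uw)]^{2k+2n-4}E(uf)}{E(u^2)^{2k+2n-1}E(w^2)^{2k+2n-2}},
\]
is derived by passing through $\omega(T)^k f=\chi_K w[E(uw)]^{k-1}E(uf)/\big(E(u^2)^kE(w^2)^k\big)$ and is therefore only valid for $k\ge 1$. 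At $k=0$ the relevant operator is $\omega(T)^n\omega(T)^{*n}f=\chi_K w[E(uw)]^{2n-2}E(wf)/\big(E(u^2)^{2n-1}E(w^2)^{2n}\big)$, whose quadratic form is an integral of $|E(wf)|^2$, not of $|E(uf)|^2$. Consequently the difference $\lambda^2\omega(T)^*\omega(T)-\omega(T)^n\omega(T)^{*n}$ does not collapse to a single coefficient function times $|E(uf)|^2$, and positivity of the resulting integral for all $f$ is no longer equivalent to a pointwise inequality between coefficient functions. This is precisely why Theorem \ref{THM1}(i) and (ii) are stated in the paper only as one-way implications.

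This undercuts both parts as you have argued them. For part (1) you invoke the introduction's claim that $k=1$ recovers $n$-power posinormality; but by the definition given in that same introduction, $n$-power posinormality is $\lambda^2T^*T-T^nT^{*n}\ge 0$, i.e.\ the case $k=0$, whereas $k=1$ gives only the weaker condition $T^*(\lambda^2T^*T-T^nT^{*n})T\ge 0$. Setting $k=1$ in Theorem \ref{THM1}(iii) therefore characterizes $1$-quasi $n$-power posinormality, not $n$-power posinormality, and the ``if'' direction of part (1) is not obtained. For part (2) you rightly flag the negative-exponent problem and propose to rerun the integral identity at $k=0$, $n=1$; but doing so honestly reproduces the computation in Theorem \ref{THM1}(i), with the mixed $uE(uf)$ versus $wE(wf)$ integrand, from which the test-function argument yields only the necessity half. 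The sufficiency halves of both statements thus require a genuinely new argument (for instance, controlling $|E(wf)|^2$ by $|E(uf)|^2$, which essentially forces conditional proportionality of $u$ and $w$) and are not bookkeeping consequences of Theorem \ref{THM1}(iii).
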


%example 1
\begin{example}\label{ex.4.2}
Let $X=[0,1]$, $d\mu=dx$, $\Sigma$ be the Lebesgue measurable sets and let $\mathscr{A}=\{\phi,X\}$. Therefore, $Tf(x)=w(x)E(uf)(x)=w(x)\int_{0}^{1}u(x)f(x)dx$. Taking $u(x)=\frac{x}{2\sqrt{2}}$ and $w(x)=5x^2+3$, we get $E(u^2)=\frac{1}{24}$, $E(w^2)=24$ and $E(uw)=\frac{11}{8\sqrt{2}}$. Therefore, for $n=2$, $\lambda^2 E(u^2)E(w^2)\geq 1$ if and only if $\lambda^2\geq1$.Thus, if $\lambda^2\geq1$, $\omega(T)$ is $k$-quasi $2$-power posinormal for all $k\in \mathbb{N}$. In general, for $\lambda^2\geq (0.98)^{2n-4}$, then $\omega(T)$ is $k$-quasi $n$-power posinormal for $k,n \in \mathbb{N}$.\\
Now, $\omega(T)$ posinormal if and only if $\lambda^2\geq1.06$.
Therefore,  there exists $\omega(T)$'s which are $k$-quasi $n$-power posinormal, but not posinormal (for example, take  $\lambda^2=0.9$ and $n=4.$)

\end{example}

%section 5%
\section{$k$-Quasi $n$-Power Posinormal Weighted Shifts on Directed Trees}
%%%%%%%%%%%%%%%%%%%%%%%%%%%%%%%%%%%%%%%%%%%%%%%%%%%%%%%%%%%%%%%%%%%%%%%%%%%%%%%%%
This section concludes with an application to weighted shifts on rooted directed trees. After reviewing graph-theoretic preliminaries, we characterize \(k\)-quasi \(n\)-power posinormality for shifts \(S_\mu\) and provide a detailed example on a tree with one root and two branches . The analysis reveals explicit norm conditions dependent on the tree’s weight structure.
%===========================================================================================

A pair $\mathscr{T}=(V,E)$ is a \textit{directed graph} if $V$ is a nonempty set and $E$ is a subset of $V\times V \backslash \{(v,v): v\in V\}$. An element of $V$ is called a \textit{vertex} of $\mathscr{T}$ and an element $E$ is called an \textit{edge} of $\mathscr{T}$.  A directed graph $\mathscr{T}$ is \textit{connected }if for any two distinct vertices $u$ and $v$ of $\mathscr{T}$, there exists a finite sequence $v_1, v_2,\cdots, v_n$ of vertices of $\mathscr{T}$ $(n\geq 2)$ with $u=v_1$, $v=v_n$ and $(v_j,v_{j+1})\in E$ or  $(v_{j+1},v_j)\in E$ for all $1 \leq j\leq n-1$. A  finite sequence $\{v_i\}_{i=1}^{n}$ of distinct vertices is said to be a circuit of $\mathscr{T}$ if $n\geq2$, for $1\leq i \leq n-1$ $(v_i,v_{i+1})\in E$ and $(v_n,v_1)\in E$.  For $W\subset V$, define $\text{Chi}(W)=\bigcup_{u\in W}\{v\in V:(u,v)\in E\}$. Inductively, for $n\in \mathbb{Z}_+$, $\text{Chi}^{<n>}(W)=W$ if $n=0$ and $\text{Chi}^{<n>}(W)=\text{Chi}(\text{Chi}^{<n-1>}W)$, if $n\geq1$. Given $v\in V$,  $\text{Chi}(v)= \text{Chi}(\{v\})$ and  $\text{Chi}^{<n>}(v)= \text{Chi}^{<n>}(\{v\})$.
 A member of $\text{Chi}(v)$ is called a \textit{child} of $v$. For  $u\in V$ if there exists a unique vertex $v\in V$ such that $(v,u)\in E$, then $v$ is said to be a \textit{parent} of
$u$ and is denoted as $\text{par}(u)=v$. A vertex $v$ of $\mathscr{T}$ is called a \textit{root} of $\mathscr{T}$ or $v\in Root(\mathscr{T})$, if there is no vertex $u$ of $\mathscr{T}$ such that $(u,v)\in E$. If $Root(\mathscr{T})$ is a singleton set, then the unique element is denoted by root. We write,
$V^0:=V \backslash Root(\mathscr{T})$. 
A directed graph $\mathscr{T}=(V,E)$ is called a \textit{directed tree} if $\mathscr{T}$ is connected,
$\mathscr{T}$ has no circuits, and each vertex $v\in V^0$ has a parent. Any directed tree has atmost one root \cite{Jablon}.

Let $\ell^2(V)$ denotes the Hilbert space of square summable complex functions on $V$ equipped with the standard inner product. The set $\{e_u\}_{u\in V}$ is an orthonormal basis of $\ell^2(V)$, where $e_u \in \ell^2(V)$ is the indicator function of $\{u\}$. For a system $\mu=\{\mu_v\}_{v\in V^0}$
of non-negative real numbers, define the \textit{weighted shift operator} $S_{\mu}$ on $\mathscr{T}$ with weights $\bold{\mu}$ by
$$S_{\mu}f:=\Lambda_{\mathscr{T}}f, f\in \mathscr{D}(S_{\mu}),$$
where, $$\mathscr{D}(S_{\mu}):=\{f \in \ell^2(V): \Lambda_{\mathscr{T}}f \in \ell^2(V)\}$$ and for $f\in \ell^2(V)$, $\Lambda _{\mathscr{T}}$ is the mapping given by

$$(\Lambda _{\mathscr{T}}f)(v):=
\begin{cases}
	\mu_v. f(\text{par}(v))& \text{if } v\in V^0,\\
	0 & \text{if v is a root of}~ \mathscr{T}.
\end{cases}$$

If $S_\mu$ is a densely defined weighted shift on the directed tree $\mathscr{T}$ with weights $\mu=\{ \mu_v\}_{v\in V^0}$, then
 $e_u\in \mathscr{D}(S_\mu^*)$ and $$S_{\mu}^*e_u=
\begin{cases}
\overline{\mu_u}e_{\text{par}(u)}&;~ u\in V^0\\
0 &; ~ \text{if u is a root of}~ \mathscr{T}
\end{cases}$$ \cite{Jablon}. Let $S_{\mu}$ be a weighted shift on a directed tree $\mathscr{T}$ with weights $\mu=\{\mu_v\}_{v\in V^0}$ and let $\sup\limits_{u\in V} \text{card}(\text{Chi}(u))<\infty$. Then $S_{\mu}\in B(\ell^2(V))$ if and only if $\sup\limits_{v\in V^0}|\mu_v|<\infty$  \cite{Jablon}. We refer \cite{Jablon} for more information regarding a weighted shift on a directed tree.

\begin{theorem}\label{thm5.4}
Let $S_{\mu}\in B(\ell^2(V))$ be a non-zero weighted shift on a directed tree $\mathscr{T}$ with weights $\mu=\{\mu_v\}_{v\in V^0}$. Then $S_{\mu}$ is $k$-quasi $n$-power posinormal operator if and only if $\lVert S_{\mu}^{*n}S_{\mu}^k f \rVert^2 \leq \lambda^2 \lVert S_{\mu}^{k+1}f\rVert^2$, for all $f\in \ell^2(V)$ and for some $\lambda >0$.
\end{theorem}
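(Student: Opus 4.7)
The plan is simply to unpack the definition of $k$-quasi $n$-power posinormality and reinterpret the resulting operator positivity as a norm inequality. By the definition recalled in the introduction, $S_\mu$ is $k$-quasi $n$-power posinormal precisely when there exists $\lambda>0$ such that the operator inequality
\[
S_\mu^{*k}\bigl(\lambda^{2}S_\mu^{*}S_\mu - S_\mu^{n}S_\mu^{*n}\bigr)S_\mu^{k}\geq 0
\]
holds; equivalently, for every $f\in\ell^{2}(V)$,
\[
\bigl\langle S_\mu^{*k}\bigl(\lambda^{2}S_\mu^{*}S_\mu - S_\mu^{n}S_\mu^{*n}\bigr)S_\mu^{k} f,\,f\bigr\rangle\geq 0.
\]
Since $S_\mu\in B(\ell^{2}(V))$, all compositions above are bounded, so inner products may be manipulated freely.

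The key step is to shift the outer $S_\mu^{*k}$ across the inner product onto the other slot and then split the bracket. This gives
\[
\lambda^{2}\bigl\langle S_\mu^{*}S_\mu\cdot S_\mu^{k}f,\,S_\mu^{k}f\bigr\rangle \;\geq\; \bigl\langle S_\mu^{n}S_\mu^{*n}\cdot S_\mu^{k}f,\,S_\mu^{k}f\bigr\rangle.
\]
The left-hand side collapses, via the identity $\langle S_\mu^{*}S_\mu g, g\rangle = \|S_\mu g\|^{2}$ with $g = S_\mu^{k}f$, to $\lambda^{2}\|S_\mu^{k+1}f\|^{2}$. The right-hand side, via $\langle S_\mu^{n}S_\mu^{*n}g,g\rangle = \|S_\mu^{*n}g\|^{2}$ with the same $g$, becomes $\|S_\mu^{*n}S_\mu^{k}f\|^{2}$. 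Combining these two collapses yields
\[
\lambda^{2}\|S_\mu^{k+1}f\|^{2}\;\geq\;\|S_\mu^{*n}S_\mu^{k}f\|^{2}\qquad\text{for all }f\in\ell^{2}(V),
\]
which is exactly the claimed inequality.

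Finally, I would note that the chain of manipulations above consists of equivalences (each inner-product move is an identity, not an estimate), so the reverse direction is automatic: if the norm inequality holds for every $f$, then retracing the computation produces the positivity of $S_\mu^{*k}(\lambda^{2}S_\mu^{*}S_\mu - S_\mu^{n}S_\mu^{*n})S_\mu^{k}$. There is no serious obstacle here; the statement is essentially a re-expression of the definition in a form well suited to the weighted-shift setting, where quantities of the form $\|S_\mu^{k+1}f\|^{2}$ and $\|S_\mu^{*n}S_\mu^{k}f\|^{2}$ can be evaluated directly in terms of the weights $\{\mu_{v}\}_{v\in V^{0}}$. The only point worth highlighting in the write-up is that boundedness of $S_\mu$ is what legitimises moving $S_\mu^{*k}$ to the other side of the inner product, so the hypothesis $S_\mu\in B(\ell^{2}(V))$ cannot be dropped.
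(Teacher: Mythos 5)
Your proposal is correct and takes essentially the same route as the paper: the paper's proof likewise unpacks the definition into $\langle(\lambda^2 S_{\mu}^{*(k+1)}S_{\mu}^{k+1}-S_{\mu}^{*k}S_{\mu}^{n}S_{\mu}^{*n}S_{\mu}^{k})f,f\rangle\geq 0$ and then identifies the two terms as $\lambda^{2}\lVert S_{\mu}^{k+1}f\rVert^{2}$ and $\lVert S_{\mu}^{*n}S_{\mu}^{k}f\rVert^{2}$ by moving adjoints across the inner product. Your write-up is just a slightly more explicit version of the same two-line equivalence chain.
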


\begin{proof}
$S_{\mu}$ is $k$-quasi $n$-power posinormal operator if and only if
$$S_{\mu}^{*k}[\lambda ^2 S_{\mu}^*S_{\mu}-S_{\mu}^n S_{\mu}^{*n}]S_{\mu}^k\geq 0.$$
$$\Longleftrightarrow \langle (\lambda ^2 S_{\mu}^{*k+1}S_{\mu}^{k+1}-S_{\mu}^{*k}S_{\mu}^n S_{\mu}^{*n}S_{\mu}^k)f,f\rangle\geq0,~ \text{for all} ~f\in \ell^2(V). $$
$$\Longleftrightarrow \lVert S_{\mu}^{*n}S_{\mu}^k f \rVert^2 \leq \lambda^2 \lVert S_{\mu}^{k+1}f\rVert^2 ~ \text{for all} ~f\in \ell^2(V).$$

\end{proof}

\begin{example}\label{ex5.5}
Let $\mathscr{T}=(V,E)$ be a directed tree with one root and $V^0=\{v_{ij};i=1,2 ~\text{and}~ j=1,2, \cdots\}$ such that $\text{card}(\text{Chi}(root))=2$ and $\text{card}(\text{Chi}(v))=1$ for all $v\in V^0$.
Let $S_{\mu}$ be the weighted shift on $\mathscr{T}$ with weights given by,
$$\mu_{v_{11}}=1, ~\mu_{v_{1j}}=2 ~\text{for all} ~j\geq2$$
$$\text{and}~\mu_{v_{22}}=\frac{1}{2}, ~\mu_{v_{2j}}=2 ~\text{for all} ~j \neq 2.$$

By \ref{PROP1}, $S_{\mu}\in B(\ell^2(V))$.

Now, $S_{\mu}e_{root}=e_{v_{11}}+e_{v_{21}}$,  $S_{\mu}(e_{v_{1j}})=2  e_{v_{1j+1}} $, for $j\geq1$.

$S_{\mu}(e_{v_{21}})=\frac{1}{2}  e_{v_{22}} $, $S_{\mu}(e_{v_{2j}})=  e_{v_{2j+1}} $, for $j\geq2$.\\
Now,
$$S_{\mu}^*e_u=
\begin{cases}
\overline{\mu_u}e_{\text{par}(u)}&;~ u\in V^0,\\
0 &; ~ u=\text{root}.
\end{cases}$$

Therefore, $S_{\mu}^*(e_\text{root})=0$,

$$S_{\mu}^*e_{v_{1j}}=
\begin{cases}
e_{\text{root}} &; ~ j=1,\\
2e_{v_{1j-1}} &;~j\geq 2.
\end{cases}$$

and
$$S_{\mu}^*e_{v_{2j}}=
\begin{cases}
e_{\text{root}} &; ~ j=1,\\
\frac{1}{2}e_{v_{21}} &; j=2,\\
e_{v_{2j-1}} &;~j> 2.
\end{cases}$$

Let $f\in \ell^2(V)$ and $f=\alpha_0e_{root}+\sum\limits_{j=1}^{\infty}\alpha_{1j}e_{v_{1j}}+\sum\limits_{j=1}^{\infty}\alpha_{2j}e_{v_{2j}}$. Then,

$S_{\mu}f=\alpha_0e_{v_{11}}+\alpha_0e_{v_{21}}+\sum\limits_{j=1}^{\infty}\alpha_{1j}e_{v_{1j+1}}+\frac{1}{2} \alpha_{21} e_{v_{22}}+\sum\limits_{j=1}^{\infty}\alpha_{2j}e_{v_{2j+1}}$.

In general,

$S_{\mu}^kf=2^{k-1}\alpha_0e_{v_{1k}}+\frac{1}{2}\alpha_0e_{v_{2k}}+2^k\sum\limits_{j=2}^{\infty}\alpha_{1j}e_{v_{1j+k}}+\frac{1}{2} \alpha_{21} e_{v_{2k+1}}+\sum\limits_{j=2}^{\infty}\alpha_{2j}e_{v_{2j+k}}$.

Therefore,

$\lVert S_{\mu}^{k+1}f\rVert^2=2^{2k}|\alpha_0|^2+\frac{1}{4}|\alpha_0|^2+2^{2k+2}\sum\limits_{j=2}^{\infty}|\alpha_{1j}|^2+\frac{1}{4}|\alpha_{21}|^2+\sum\limits_{j=2}^{\infty}|\alpha_{2j}|^2$.

By computations, we get, for $k-n>1$,
$$S_{\mu}^{*n}S_{\mu}^kf=2^{k+n-1}\alpha_0e_{v_{1k-n}}+\frac{1}{2}\alpha_0e_{v_{2k-n}}+2^{k+n}\sum\limits_{j=2}^{\infty}\alpha_{1j}e_{v_{1j+k-n}}+\frac{1}{2} \alpha_{21} e_{v_{2k-n+1}}+\sum\limits_{j=2}^{\infty}\alpha_{2j}e_{v_{2j+k-n}}.$$

Therefore, for $k-n>1$,
$$\lVert S_{\mu}^{*n}S_{\mu}^kf\rVert^2=2^{2k+2n-2}|\alpha_0|^2+\frac{1}{4}|\alpha_0|^2+2^{2k+2n}\sum\limits_{j=2}^{\infty}|\alpha_{1j}|^2++\sum\limits_{j=2}^{\infty}|\alpha_{2j}|^2.$$

Taking, $$y_1=2^{2k}|\alpha_0|^2, y_2=\frac{1}{4}|\alpha_0|^2, y_3= 2^{2k+2}\sum\limits_{j=2}^{\infty}|\alpha_{1j}|^2, y_4=\frac{1}{4}|\alpha_{21}|^2~ \text{and}~ y_5=\sum\limits_{j=2}^{\infty}|\alpha_{2j}|^2,$$

we get, for $k>n+1$ $S_{\mu}$ is $k$-quasi $n$-power posinormal if and only if $$\lambda^2\geq \dfrac{2^{2n-2}y_1+y_2+2^{2n}y_3+y_4+y_5}{y_1+y_2+y_3+y_4+y_5}.$$
Since $y_i\geq 0$, for all $i$, we can see that, $2^{2n}$ is a upper bound for $\dfrac{2^{2n-2}y_1+y_2+2^{2n}y_3+y_4+y_5}{y_1+y_2+y_3+y_4+y_5}.$
Therefore, taking $\lambda\geq2^{n}$, $S_{\mu}$ is $k$-quasi $n$-power posinormal for $k>n+1$.
\end{example}
%=================================================================================================
\section{Conclusion and Future Work}
%=======================================================================================
This paper has systematically investigated the class of \(k\)-quasi \(n\)-power posinormal operators across diverse settings. For composition operators \(C_T\) on \(L^2(\mu)\), we established necessary and sufficient conditions for \(k\)-quasi \(n\)-power posinormality, expressed via Radon-Nikodym derivatives \(h_k\) and the transformation \(T\). Analogous characterizations were derived for weighted composition operators \(W_T\), involving products of weights \(\pi_k\) and shifts of \(h_k\). For Lambert conditional operators \(T = M_w E M_u\), we characterized when their Cauchy duals \(\omega(T)\)— constructed via the Moore-Penrose inverse— are \(k\)-quasi \(n\)-power posinormal, yielding the unified condition \(\lambda^2 E(u^2)^{2n-3} E(w^2)^{2n-3} \geq E(uw)^{2n-4}\) on \(K\). Finally, we extended these results to weighted shifts \(S_\mu\) on rooted directed trees\ and provided explicit examples validating the theory. These contributions generalize classical posinormal operator theory and bridge measure theory, conditional expectations and operator theory.

Several directions emerge naturally from this work. First, relaxing the assumption \(T^{-k}(\Sigma) = \Sigma\) would extend the applicability of the results to non-measure-preserving transformations. Second, exploring the class beyond \(L^2\)-spaces— such as in \(L^p\) (\(p \neq 2\)), non-commutative \(L^2\)-spaces of von Neumann algebras, or Hardy/Bergman spaces— could reveal deeper connections to harmonic analysis. Third, investigating applications in frame theory (e.g., optimal frame bounds via Cauchy duals), quantum information (e.g., posinormal quantum channels), or network dynamics (e.g., stability of flows modelled by weighted shifts on trees) would demonstrate the practical utility of these operators. Additionally, extending the framework to broader operator classes (e.g., Toeplitz operators, matrix-valued weighted shifts) or linking it to Aluthge transformations and subnormality remains open. Computational methods to verify posinormality for high-dimensional operators could also be developed.

%%%%%%%%%%%%%%%%%%%%%%%%%%%%%%%%%%%%%%%%%%%%%%%%%%%%%%%%%%%%%%%%%%%%%%%%%%%%%%%%%%%%%%%%%%
 %\nocite{*}
\bibliographystyle{amsplain}
\bibliography{references}

\end{document}